\newtheorem*{thm*}{Theorem}
\newtheorem{thm}{Theorem}[section]
\newtheorem{lem}[thm]{Lemma}
\newtheorem{prop}[thm]{Proposition}
\theoremstyle{remark}
\newtheorem{rem}[thm]{Remark}
\newtheorem*{rem*}{Remark}
\numberwithin{equation}{section}
\crefname{section}{\textsection}{\textsection\textsection}
\newcommand{\ZZ}{\mathbb{Z}}
\newcommand{\RR}{\mathbb{R}}
\newcommand{\LandauO}{O}
\DeclarePairedDelimiter\parentheses{\lparen}{\rparen}
\DeclarePairedDelimiter\braces{\lbrace}{\rbrace}
\DeclarePairedDelimiter\brackets{\lbrack}{\rbrack}
\DeclarePairedDelimiter\abs{\lvert}{\rvert}
\DeclarePairedDelimiter\norm{\lVert}{\rVert}
\DeclarePairedDelimiter\floor{\lfloor}{\rfloor}
\NewDocumentCommand\set{ s o m o }{%
	\IfBooleanTF{#1}{\IfNoValueTF{#4}{\braces*{#3}}{\braces*{\,#3:#4\,}}}{%
		\IfNoValueTF{#2}{\IfNoValueTF{#4}{\braces{#3}}{\braces{\,#3:#4\,}}}{%
			\IfNoValueTF{#4}{\braces[#2]{#3}}{\braces[#2]{\,#3:#4\,}}}}%
}
\NewDocumentCommand\e{ s O{} m }{%
	\IfBooleanTF{#1}{%
		\operatorname{e}_{#2}\parentheses*{#3}%
	}{\operatorname{e}_{#2}\parentheses{#3}}%
}
\title[Discrete $\Omega$-results]{Discrete $\Omega$-results for the Riemann zeta function}
\subjclass[2020]{
	Primary
	11M06}
\keywords{Riemann zeta function, $\Omega$-results, resonance method, arithmetic progressions}
\author{Paolo~Minelli}
\address{%
	Paolo~Minelli\\%
	Department of Mathematics\\%
	KTH Royal Institute of Technology\\%
	Stockholm\\%
	Sweden
}
\email{pminelli@kth.se}
\author{Athanasios~Sourmelidis}
\address{%
	Athanasios~Sourmelidis\\%
	Insitute for Analysis and Number Theory\\%
	Graz University of Technology\\%
	Steyrergasse 30\\%
	8010~Graz\\%
	Austria%
}
\email{sourmelidis@math.tugraz.at}
\thanks{%
	PM thanks  the Knut and Alice Wallenberg foundation grant KAW 2019.0503 as well as the Austrian Science Fund (FWF), project I-3466 for their support.
	AS was supported by the Austrian Science Fund (FWF) project~M~3246-N}
\begin{document}
	\begin{abstract}
		We study lower bounds for the Riemann zeta function $\zeta(s)$ along vertical arithmetic progressions in the right-half of the critical strip.
		We show that the lower bounds obtained in the discrete case coincide, up to the constants in the exponential, with the ones known for the continuous case, that is when the imaginary part of $s$ ranges on a given interval.
		Our methods are based on a discretization of the resonance method for estimating extremal values of $\zeta(s)$. 
	\end{abstract}
	\maketitle
	
	\section{Introduction}
	\subsection{Main results}
	The aim of this paper is to provide lower bounds, also known as $\Omega$-results, of the Riemann zeta function $\zeta(s)$, $s:=\sigma+it$, along arithmetic progressions.
	More precisely, given $\alpha>0$ and $\beta\in [0, \alpha)$ we seek for lower bounds for the expression
	\begin{align*}
		\max_{\ell \in [T^\theta, T]\cap\mathbb{Z}}\vert \zeta(\sigma+i(\alpha\ell+\beta))\vert,
	\end{align*}
	whenever $\sigma \in [1/2, 1]$ and $\theta$ is a parameter to be specified later.

	We begin by investigating large values of $\zeta(s)$ inside the strip $1/2<\sigma< 1$, where we obtain discrete $\Omega$-results matching, up to the constant in the exponential, the known continuous lower bounds.
	From now on, we will tacitly assume that $\ell\in\mathbb{Z}$.
	\begin{thm}\label{thm:thm-strip}\label{thm:ourresult1}
		Let $1/2<\sigma<1$, $1/(2(1+\sigma))<\delta<1/2$, $\alpha>0$ and $\beta\in[0,\alpha)$ be fixed.
		Then for any sufficiently large $N\geq1$
		\[
		\max_{N\leq\ell\leq 2N}|\zeta(\sigma+i(\alpha\ell+\beta))|\geq \exp\left(\frac{(1+o(1))\parentheses*{(\frac{1}{2}-\delta)\log N}^{1-\sigma}}{(1-\sigma)(\log\log N)^\sigma}\right),
		\]
		where the implicit constant depends on the aforementioned parameters.
	\end{thm}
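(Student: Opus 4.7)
The plan is to implement a discrete analogue of Soundararajan's resonance method, in the style of Hilberdink and Aistleitner, adapted to the averaging provided by the arithmetic progression $\{\alpha\ell+\beta : N\leq\ell\leq 2N\}$. Fix a nonnegative Schwartz bump $\Phi$ with $\Phi\equiv 1$ on $[1,2]$ and $\operatorname{supp}\Phi\subset[1/2,5/2]$; set $Y=N^{1/2-\delta+\eta}$ for a small $\eta>0$; let $\mathcal{M}\subset[1,Y]\cap\NN$ be a set of integers with positive weights $r(n)$ to be prescribed; and form the resonator $R(t)=\sum_{n\in\mathcal{M}}r(n)\,n^{-it}$. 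Since $|R|^2\geq 0$, one has the elementary bound $\max_{N\leq\ell\leq 2N}|\zeta(\sigma+i(\alpha\ell+\beta))|\geq |M_2|/M_1$, where
\[
M_1=\sum_{\ell\in\ZZ}\Phi(\ell/N)|R(\alpha\ell+\beta)|^2,\qquad M_2=\sum_{\ell\in\ZZ}\Phi(\ell/N)|R(\alpha\ell+\beta)|^2\zeta(\sigma+i(\alpha\ell+\beta)),
\]
so the task reduces to bounding this ratio from below.

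One first replaces $\zeta$ in $M_2$ by a smoothly truncated Dirichlet polynomial $\sum_{k\leq K}k^{-\sigma-it}$ coming from the approximate functional equation, with $K$ chosen near the balance $YK\approx N^{1-o(1)}$ and with the remainder absorbed via Cauchy--Schwarz against a second-moment bound for $|R|^2$. Expanding $|R|^2$ turns $M_2$ into a triple sum in $(m,n,k)\in\mathcal{M}\times\mathcal{M}\times[1,K]$ whose $\ell$-dependence is only through $(mk/n)^{-i\alpha\ell}$. Poisson summation in $\ell$ gives
\[
\sum_{\ell\in\ZZ}\Phi(\ell/N)\Big(\tfrac{mk}{n}\Big)^{-i\alpha\ell}=N\sum_{j\in\ZZ}\widehat\Phi\Big(N\big(j+\tfrac{\alpha}{2\pi}\log\tfrac{mk}{n}\big)\Big),
\]
which, by the rapid decay of $\widehat\Phi$, is negligible unless $\alpha\log(mk/n)/(2\pi)\in\ZZ+O(1/N)$. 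The mode $j=0$ collapses to the diagonal $mk=n$ (which is automatic once $YK\ll\alpha N$), yielding main terms $N\widehat\Phi(0)\sum_{m,\,mk\in\mathcal{M}}r(m)r(mk)k^{-\sigma}$ and $N\widehat\Phi(0)\sum_{m}r(m)^2$ for $M_2$ and $M_1$, respectively; the shift phase $(mk/n)^{-i\beta}$ equals $1$ on this diagonal and is thus harmless.

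For the weights one adopts the Aistleitner-type multiplicative construction: $\mathcal{M}$ consists of squarefree integers whose prime factors lie in $[L^2,\exp((\log N)^A)]$ for a suitable large $A$, with $r(p)\asymp L/(\sqrt{p}(\log p-\log\log N))$ on the appropriate prime range and $L\asymp(\log N)(\log\log N)^{-\sigma/(1-\sigma)}$. A standard Euler-product computation, Mertens-type estimates and partial summation along primes then yield
\[
\frac{\sum_{m,\,mk\in\mathcal{M}}r(m)r(mk)k^{-\sigma}}{\sum_m r(m)^2}\;\geq\;\exp\!\Big(\frac{(1+o(1))(\log Y)^{1-\sigma}}{(1-\sigma)(\log\log Y)^\sigma}\Big).
\]
Substituting $\log Y=(1/2-\delta+\eta)\log N$ and letting $\eta\downarrow 0$ produces the exponent claimed in the theorem.

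The principal obstacle is control of the non-zero Poisson modes $j\neq 0$, which pick up precisely those triples $(m,n,k)\in\mathcal{M}\times\mathcal{M}\times[1,K]$ for which $mk/n$ is multiplicatively within $1+O(1/N)$ of the irrational number $e^{2\pi j/\alpha}$. For each admissible $j$ (necessarily $|j|\ll\log(YK)$), a Diophantine spacing argument bounds the number of such triples by $O((YK)^{1+o(1)}/N)$, and summing over $j$ forces $YK\leq N^{1-\eta'}$ in order for the total off-diagonal contribution to be absorbed by the main term. Combining this with the lower bound on $K$ needed to render the approximate functional equation error negligible against $M_2$ is exactly what fixes the threshold $\delta>1/(2(1+\sigma))$ appearing in the hypothesis. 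The remaining technicalities, namely the second-moment bound for $|R|^2$, the passage from sharp to smooth cutoffs in $\ell$, and the handling of arbitrary real $\alpha$, are routine once the main Poisson decomposition is in place.
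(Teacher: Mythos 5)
Your framework --- resonating along the progression, smoothing in $\ell$, Poisson summation to extract a diagonal main term, and estimating the off-diagonal Fourier modes --- is the same as the paper's, and the choice $Y\asymp N^{1/2-\delta}$ for the resonator length is also correct. But the step you call the ``principal obstacle'' is not handled correctly, and that is where the real work of the proof lies. You propose to absorb the $j\neq 0$ Poisson modes by a uniform Diophantine count of $O((YK)^{1+o(1)}/N)$ bad triples per mode, concluding that $YK\leq N^{1-\eta'}$ must hold. With $Y\asymp N^{1/2-\delta}$ this forces $K\ll N^{1/2+\delta}\ll N\asymp |t|$, so you can no longer use the clean approximate functional equation $\zeta(s)=\sum_{n\leq K}n^{-s}+O(K^{-\sigma})$ valid for $K\gtrsim|t|$ and would have to contend with the $\chi(s)$-sum of the full AFE. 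The paper takes $K\asymp N$, so $YK\asymp N^{3/2-\delta}\gg N$; the $j=0$ off-diagonal is killed because $|\log(n/(mk))|\gg 1/M\gg N^{-1/2+\delta}$ whenever $n\neq mk$, and the $j\neq 0$ modes are split into two regimes according to the size of $mk$. For $mk<N^{1/2-\epsilon}$ there is at most one coprime pair $(a_\ell,b_\ell)$ near $e^{2\pi\ell/\alpha}$, and the resulting contribution is \emph{not} $o(\|R\|^2)$: it is bounded via Gronwall's estimate for $\sum_{m\mid n}(m/n)^\sigma$ by $\exp\bigl((1+o(1))(\log N)^{1-\sigma}/((1-\sigma)\log\log N)\bigr)\|R\|^2$, an error of the same exponential order as the main term, dominated only because its exponent has $\log\log N$ rather than $(\log\log N)^\sigma$ in the denominator. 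Your count cannot see this term. For $mk\geq N^{1/2-\epsilon}$, the bound $m\leq N^{1/2-\delta}$ forces $k>N^{\delta-\epsilon}$, and the factor $k^{-\sigma}$ together with a trivial count of admissible $k$ gives $\ll N^{1/2-(1+\sigma)\delta+2\epsilon}\|R\|^2$; requiring this to be $o(\|R\|^2)$ is exactly what produces the hypothesis $\delta>1/(2(1+\sigma))$. Your claim that this threshold comes from balancing $YK$ against the AFE error is therefore incorrect.

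Two smaller issues. First, the resonator you describe (squarefree support, $r(p)\asymp L/\bigl(\sqrt{p}\,(\log p-\log\log N)\bigr)$) is the Bondarenko--Seip construction tuned for $\sigma=1/2$; the $p^{1/2}$ should be $p^{\sigma}$ and the parameter $L$ retuned for the strip exponent $(\log Y)^{1-\sigma}/(\log\log Y)^{\sigma}$. The paper instead uses the divisor-set resonator from the Bondarenko--Seip survey: $r$ is the characteristic function of the divisors of $\prod_{p\leq x}p^{\ell-1}$ with $x\asymp(1/2-\delta)\log N/\log\log N$ and $\ell\asymp\log\log N$, which is what makes \cref{lem: Bonda-Seip-gcd-bound} applicable and also makes the error analysis cleaner because $r(n)\in\{0,1\}$. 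Second, your computation of $M_1$ by a second Poisson summation is fine but unnecessary; since the resonator has length below $N^{1/2}$ the Montgomery--Vaughan mean-value theorem gives $\sum_{N\leq\ell\leq 2N}|R(\alpha\ell+\beta)|^2\ll N\log N\|R\|^2$ directly, which is the crude upper bound the paper uses for the denominator.
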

	
	We then restrict our attention to homogeneous progressions, i.e. $\beta=0$. 
	In this case we are able to establish a lower bound which matches, up to the constant in the exponential, the best lower bounds known for the continuous case. 
	
	\begin{thm}\label{thm:ourresult2}
		Let $\alpha>0$ be fixed.
		Then for any sufficiently large $N\geq1$
		\begin{align*}
			\max_{\sqrt{N}\leq\ell\leq N\log N}\abs*{ \zeta\parentheses*{\frac{1}{2}+i\alpha \ell}} \geq \exp\left(c\sqrt{\frac{\log N\log\log\log N}{\log\log N}}\right),
		\end{align*}
		where $c<1/\sqrt{2}$ is admissible.
	\end{thm}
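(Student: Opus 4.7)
I plan to adapt the Bondarenko--Seip resonator construction, originally used to obtain the continuous extremum on the critical line, to the discrete homogeneous arithmetic progression $\{\alpha\ell : \ell \in \mathbb{Z}\}$. Let $R(t) := \sum_{n \in \mathcal{M}} r(n)\, n^{-it}$ be the Bondarenko--Seip resonator: $\mathcal{M}$ is a set of integers supported on primes from a carefully chosen dyadic window around $\sqrt{\log N \log\log N}$, and the coefficients $r(n)$ (essentially square-roots of a divisor-type count) are calibrated so that in the continuous case the ratio of $\int |R(t)|^2 \zeta(1/2+it)\,\Phi(t/T)\,dt$ to $\int |R(t)|^2\Phi(t/T)\,dt$ exceeds $\exp\bigl(c\sqrt{\log T \log\log\log T/\log\log T}\bigr)$ for any $c < 1/\sqrt{2}$.

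I would then study the discrete analogue
\begin{equation*}
    \Xi_N := \frac{\left|\sum_{\ell \in \mathbb{Z}} \zeta\bigl(\tfrac12 + i\alpha\ell\bigr)\, |R(\alpha\ell)|^2\, \Phi(\ell)\right|}{\sum_{\ell \in \mathbb{Z}} |R(\alpha\ell)|^2\, \Phi(\ell)},
\end{equation*}
with $\Phi$ a smooth nonnegative bump whose support sits strictly inside $[\sqrt{N},\, N\log N]$, so that $\max_{\sqrt N\leq \ell\leq N\log N}|\zeta(1/2+i\alpha\ell)| \geq \Xi_N$. Expanding $|R(\alpha\ell)|^2$ and performing Poisson summation in $\ell$, the denominator becomes
\[
\sum_{m,n\in\mathcal{M}} r(m)\overline{r(n)} \sum_{k \in \mathbb{Z}} \widehat{\Phi}\!\left(\tfrac{\alpha}{2\pi}\log(n/m)-k\right),
\]
and by the rapid decay of $\widehat{\Phi}$ only the diagonal $m=n$ (together with a potentially sparse set of $k\neq 0$ resonances) contributes. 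The homogeneity $\beta=0$ is crucial here: the aliasing condition $\alpha\log(n/m)\in 2\pi\mathbb{Z}$ is \emph{purely multiplicative}, so it can be evaded by choosing the primes generating $\mathcal{M}$ so that the ratios $n/m$ stay quantitatively far from $\{e^{2\pi k/\alpha} : k\neq 0\}$.

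The numerator is treated identically after substituting a smoothed approximate functional equation for $\zeta(1/2+i\alpha\ell)$, which yields a triple sum $\sum_{m,n\in\mathcal{M}}\sum_{j\leq Y} r(m)\overline{r(n)}\, j^{-1/2} \sum_\ell (m/(nj))^{i\alpha\ell}\Phi(\ell)$. Poisson summation again collapses the inner sum to the diagonal constraint $nj = m$ modulo controllable off-diagonal terms, and the surviving diagonal is precisely the quantity that the Bondarenko--Seip choice of $r$ is designed to maximise against the denominator. The ratio $\Xi_N$ therefore inherits the full continuous lower bound $\exp\bigl(c\sqrt{\log N\log\log\log N/\log\log N}\bigr)$ with any $c<1/\sqrt{2}$, and the theorem follows.

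The main technical obstacle is the estimation of the Poisson off-diagonal terms ($k\neq 0$), which have no counterpart in the continuous case. One needs a quantitative statement guaranteeing that exceptional pairs $(m,n)\in\mathcal{M}^2$, and triples $(m,n,j)$, whose log-ratio lies within $1/(N\log N)$ of a nonzero integer multiple of $2\pi/\alpha$ contribute at most a negligible fraction of the diagonal. Securing this uniformly while preserving the sharp constant $c<1/\sqrt{2}$ — rather than losing a logarithmic factor to a pigeonhole on the ratios $m/n$ — is what forces the short-interval support of the primes generating $\mathcal{M}$ and the precise choice of truncation length $Y$ in the approximate functional equation.
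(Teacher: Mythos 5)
Your overall strategy (discretize the Bondarenko--Seip resonance identity via Poisson summation in $\ell$) is the right starting point, but the step you yourself flag as ``the main technical obstacle'' --- controlling the nonzero Poisson modes --- is a genuine gap, and the route you propose for closing it would not work. The Bondarenko--Seip set $\mathcal{M}$ contains up to $N^{\kappa}$ integers of size up to $\exp\bigl(C(\log N)^{3}\bigr)$, so the frequencies $\log(n/m)$, $m,n\in\mathcal{M}$, are separated only at a scale like $e^{-C(\log N)^3}$, vastly below the Poisson resolution $1/N$; no separation or pigeonhole argument limits the number of pairs falling into a window of width $N^{-1+\epsilon}$ around $2\pi k/\alpha$, and for arbitrary fixed $\alpha>0$ the Diophantine problem of keeping integer ratios $n/m$ ``quantitatively far'' from the points $e^{2\pi k/\alpha}$ is intractable, since one has no control over how well these transcendental points are approximated by rationals. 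This is precisely the difficulty the paper's Remark after the numerator estimate identifies as the reason the theorem is restricted to homogeneous progressions.

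The paper sidesteps estimating the nonzero modes altogether. It takes $\Phi(x)=e^{-x^2/2}$, whose Fourier transform is \emph{positive}; since $\beta=0$, every term produced by Poisson summation in the numerator $D_{2,R}(N)$ is a nonnegative real number, so all $\ell\neq0$ modes may simply be discarded, and the $\ell=0$ mode is exactly the continuous integral that Bondarenko and Seip bound from below. The denominator is never subjected to Poisson summation at all: Gallagher's lemma bounds $\sum_\ell|R(\alpha\ell)|^2\Phi(\ell/N)$ by the continuous integrals of $|R|^2$ and $|R'|^2$ against $\Phi$, costing only a power of $\log N$. Your sketch also leaves the constant unexplained: the continuous theorem gives any $c<\sqrt{1-\theta}$, and the discrete bound is capped at $c<1/\sqrt2$ because one must take $\#\mathcal{M}\le N^{\kappa}$ with $\kappa<1/2$ so that the trivial bound $|R|^2\le\#\mathcal{M}\norm{R}^2$ renders the approximate-functional-equation errors and the contribution of $|\ell|<\sqrt N$ (handled via the discrete second moment of $\zeta$) of size $O\bigl(N^{1/2+\kappa}(\log N)^{3/2}\norm{R}^2\bigr)=o\bigl(D_{2,R}(N)\bigr)$. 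Without identifying this constraint your argument does not produce the stated constant.
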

	
	\begin{thm}\label{thm:ourresult3}
		Let $\alpha>0$ be fixed.
		Then for any sufficiently large $N\geq1$
		\begin{align*}
			\max_{\sqrt{N}\leq\ell\leq N}\abs*{ \zeta\parentheses*{1+i\alpha \ell}} \geq e^\gamma (\log\log N+\log\log\log N+\LandauO(1)).
		\end{align*}
		
	\end{thm}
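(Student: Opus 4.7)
The plan is to adapt the classical Granville--Soundararajan Diophantine-approximation argument to the homogeneous progression $t = \alpha\ell$: locate an integer $\ell \in [\sqrt{N}, N\log N]$ for which $\cos(\alpha\ell\log p)$ is close to $1$ for every prime $p$ up to an optimally chosen threshold $y$, and then read off the stated bound from the Euler product of $\zeta(1+i\alpha\ell)$. This parallels the strategy for \cref{thm:ourresult2}, but with the resonator replaced by a direct simultaneous Diophantine construction that is particularly well suited to the line $\Re s = 1$.

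\textbf{Step 1 (Diophantine approximation).} Let $y = y(N) \to \infty$ and write $\theta_p := \alpha(\log p)/(2\pi)$. A box pigeonhole in $\mathbb{T}^{\pi(y)}$ applied to the images $(\ell\theta_p)_{p \leq y}$ as $\ell$ ranges over $[1, \lfloor N\log N\rfloor]$ produces an integer $\ell^* \in (0, N\log N]$ with $\|\ell^*\theta_p\| \leq \delta$ for every $p \leq y$, provided $N\log N \geq \delta^{-\pi(y)}$. To ensure $\ell^* \geq \sqrt{N}$, I would either apply the pigeonhole to two disjoint subintervals of $[\sqrt{N}, N\log N]$ (so that the resulting difference lies in the target range) or scale a small approximator by an integer multiplier; in both cases the mild loss is absorbed by sharpening $\delta$.

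\textbf{Step 2 (Euler product).} With $\ell^*$ in hand, expand
\[
\log|\zeta(1+i\alpha\ell^*)| = -\tfrac{1}{2}\sum_{p}\log\bigl(1 - 2p^{-1}\cos(\alpha\ell^*\log p) + p^{-2}\bigr).
\]
For $p \leq y$ the Diophantine bound yields $1 - \cos(\alpha\ell^*\log p) \leq 2\pi^2\delta^2$, so the contribution of these primes is at least
\[
\sum_{p \leq y}\log(1-p^{-1})^{-1} - \LandauO\bigl(\delta^2 \log\log y\bigr) = \gamma + \log\log y + \LandauO\bigl(\delta^2\log\log y + 1/\log y\bigr),
\]
by Mertens' third theorem, while the tail $p > y$ contributes $\LandauO(1)$ via the crude estimate $|\log(1-p^{-1-it})| \leq 2p^{-1} + \LandauO(p^{-2})$ together with partial summation. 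Balancing $\pi(y)\log(1/\delta) \asymp \log N$ with $\delta = 1/\log y$ and $y \asymp (\log N)(\log\log N)$ forces $\log y = \log\log N + \log\log\log N + \LandauO(1)$ and $\delta^2\log\log y = o(1)$, so exponentiating gives $|\zeta(1+i\alpha\ell^*)| \geq e^\gamma \log y \cdot (1+o(1))$, which matches the right-hand side of the theorem.

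\textbf{Main obstacle.} The delicate point is the lower endpoint $\sqrt{N}$: a direct pigeonhole on $[1, N\log N]$ only bounds $\ell^*$ from above, and any device ensuring $\ell^* \geq \sqrt{N}$ must not inflate $\pi(y)$ beyond $\asymp \log N/\log\log N$---otherwise the secondary $\log\log\log N$ gain collapses into the error term. The logarithmic width $\log(N\log N/\sqrt{N}) = (1/2)\log N + \log\log N$ of the target window turns out to be just enough room to place the approximator without degrading $\delta$, and verifying this tightly is the technical heart of the proof. As in \cref{thm:ourresult2}, the restriction to homogeneous progressions ($\beta = 0$) is essential: it is precisely what makes the relevant frequencies $\ell\theta_p$ behave linearly in $\ell$, so that the standard lattice pigeonhole applies.
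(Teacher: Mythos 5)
Your approach diverges fundamentally from the paper's, and unfortunately it contains two gaps that are not technicalities but the actual crux of the problem at $\sigma=1$. First, the tail estimate in Step 2 is false as stated: the bound $\abs{\log(1-p^{-1-it})}\leq 2p^{-1}+\LandauO(p^{-2})$ gives $\sum_{p>y}2p^{-1}$, which diverges, so the primes $p>y$ do \emph{not} contribute $\LandauO(1)$ by any crude termwise estimate. To control them one must invoke an approximate Euler product $\zeta(1+it)=\zeta(1+it;y_1)(1+o(1))$ valid for \emph{every} $t$ in the range, and unconditionally this forces a truncation length $y_1$ vastly larger than your Diophantine threshold $y_0\asymp\log N\log\log N$. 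The primes in $(y_0,y_1]$ are then uncontrolled by the pigeonhole, and bounding their contribution is precisely where Granville--Soundararajan lose their $-\log_4 T$ term; your proposal never confronts this range. Second, your parameter balance is internally inconsistent: with $y\asymp\log N\log\log N$ one has $\pi(y)\asymp\log N$, so the pigeonhole constraint $\pi(y)\log(1/\delta)\leq\log(N\log N)$ forces $\delta\gg1$, a constant --- not $\delta=1/\log y$ as you claim (with your choices, $\pi(y)\log(1/\delta)\asymp\log N\cdot\log\log\log N$, which exceeds the available $\log N$). Conversely, insisting on $\delta\to0$ shrinks the admissible $\pi(y)$ to $o(\log N)$ and re-introduces a quaternary-logarithm loss in $\log y$. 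Either way the secondary term $\log\log\log N$ is exactly what gets destroyed; this is the known obstruction that makes the Diophantine route top out at the Granville--Soundararajan bound \cref{gransound} rather than the bound claimed in the theorem. (Your concerns about the lower endpoint $\sqrt N$ are, by contrast, easily handled and not the technical heart of anything.)

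The paper proves \cref{thm:ourresult3} by an entirely different route: a discretisation of the resonance method of Aistleitner, Mahatab and Munsch. One takes the long resonator $R(t)=\prod_{p\leq x}(1-q_pp^{it})^{-1}$ with completely multiplicative weights $q_p=1-p/x$ and $x=\log N/(6\log\log N)$, a Gaussian weight $\Phi$, and compares $D_{1,R}(N)=\sum_\ell\zeta(1+i\alpha\ell;y)\abs{R(\alpha\ell)}^2\Phi(\ell/N)$ with $D_{2,R}(N)=\sum_\ell\abs{R(\alpha\ell)}^2\Phi(\ell/N)$ via Poisson summation. The two features your approach lacks are built in here: the truncated Euler product $\zeta(1+it;y)$ with the very large $y=\exp((\log N)^{11})$ handles the approximation to $\zeta(1+it)$, and the positivity of the Fourier transform of $\Phi$ together with $a_{k,y}\geq a_{k,x}\geq0$ lets one pass from $y$ down to $x$ with no loss at all, yielding $D_{1,R}/D_{2,R}\geq\sum_k a_{k,x}q_k\geq e^\gamma(\log\log N+\log\log\log N)+\LandauO(1)$. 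If you want to salvage a Diophantine argument, you should expect to recover only the weaker bound with a $-\log\log\log\log N$ correction; to reach the stated theorem you need the positivity mechanism of the resonance method.
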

	\subsection{Background on \texorpdfstring{$\Omega$-results}{O-results}}
	
	The investigation of $\Omega$-results for the Riemann zeta function $\zeta(s)$, $s:=\sigma+it$, as well as for other $L$-functions has a long history. 
	These questions originate from the converse investigations on $O$-results of $\zeta(s)$ in the critical strip $0<\sigma<1$, which pose very challenging problems.
	On the other hand, $\Omega$-results are obtained more successfully than $O$-results because one needs to verify their truth for some indefinitely large values of $t$, while an $O$-result is a statement about all large values of $t$. 
	Another reason for studying $\Omega$-results is that under the Riemann hypothesis, they most likely represent the real growth of $\zeta(s)$ when $\sigma>1/2$ (see for example \cite[Theorem 14.2]{Titzeta} and a heuristic argument in \cite[page 513]{Montgomeryzeta}).
	Bohr, Landau and Titchmarsh were among the first to develop methods in obtaining $\Omega$-results and we refer to \cite[Chapter VIII]{Titzeta} for an exposition.
	
	In 1977, Montgomery \cite{Montgomeryzeta} showed that for $\sigma\in(1/2,1)$ and $\theta=(\sigma-1/2)/3$,
	\begin{align}\label{montgomery}
		\max_{ t\in[T^\theta, T]}|\zeta(\sigma+it)|\geq\exp\parentheses*{C_\sigma\frac{(\log T)^{1-\sigma}}{(\log\log T)^\sigma}},
	\end{align}
	where $C_\sigma=(\sigma-1/2)^{1/2}/20$ is admissible.
	Montgomery also extended \cref{montgomery} to $\sigma=1/2$ and a suitable constant $C_{1/2}$ conditionally under the Riemann hypothesis.
	
	In recent years, Soundararajan \cite{Sound-res}  developed a new method called \textit{the resonance method}, which turned out to be particularly successful and flexible. 
	Roughly speaking, this method consists in comparing the averages
	\begin{align*}
		\int_\mathbb{R}  \zeta\parentheses*{\frac{1}{2}+it} \vert R(t)\vert^2 \Phi\parentheses*{\frac{t}{T}} dt\quad\text{ and }\quad\int_\mathbb{R} \vert R(t)\vert^2 \Phi\parentheses*{\frac{t}{T}} dt
	\end{align*}
	for a suitably selected Dirichlet polynomial $R(t)$, called {\it the resonator}, and a smooth function  $\Phi(x)$ with compact support.
	Using this method and maximising the quadratic form stemming out of the ratio of the above quantities, Soundararajan obtained \cref{montgomery} for $\sigma=1/2$ unconditionally and in a shorter range of $t$.
	\begin{thm*}[Soundararajan]
		If $T$ is sufficiently large, then
		\begin{align*}
			\max_{t\in [T, 2T]}\abs*{\zeta\parentheses*{\frac{1}{2}+it}}\geq \exp\left((1+o(1))\sqrt{\frac{\log T}{\log\log T}}\right).
		\end{align*}
	\end{thm*}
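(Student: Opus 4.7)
The plan is to implement Soundararajan's \emph{resonance method}. Fix a non-negative smooth $\Phi$ with compact support in $(1,2)$, and a resonator
$$R(t)=\sum_{n\leq N}r(n)\,n^{-it}$$
with $N=T^{1/2-\varepsilon}$ and $r(n)\geq 0$ multiplicative and supported on squarefree integers, to be specified. Set $M_1(R,T)=\int_{\mathbb{R}}\zeta(\tfrac{1}{2}+it)|R(t)|^2\Phi(t/T)\,dt$ and $M_2(R,T)=\int_{\mathbb{R}}|R(t)|^2\Phi(t/T)\,dt$. The trivial bound $|M_1|\leq\bigl(\max_{t\in[T,2T]}|\zeta(\tfrac{1}{2}+it)|\bigr)M_2$ reduces the theorem to showing $|M_1|/M_2\geq\exp\bigl((1+o(1))\sqrt{\log T/\log\log T}\bigr)$.

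First I would evaluate $M_2$: expanding $|R(t)|^2=\sum_{m,n}r(m)r(n)(m/n)^{it}$, the off-diagonal pairs yield $\widehat\Phi(T\log(m/n)/2\pi)$, which decays rapidly since $|\log(m/n)|\gtrsim 1/N\gg 1/T$, so $M_2=T\widehat\Phi(0)\sum_n r(n)^2(1+o(1))$. For $M_1$ I would insert the approximate functional equation
$$\zeta(\tfrac{1}{2}+it)=\sum_{k\leq t/2\pi}k^{-1/2-it}+\chi(\tfrac{1}{2}+it)\sum_{k\leq t/2\pi}k^{-1/2+it}+\LandauO(t^{-1/4})$$
and integrate against $|R(t)|^2\Phi(t/T)$. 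A stationary-phase analysis shows that the dominant contribution arises from the resonance condition $nk=m$, giving
$$M_1=T\widehat\Phi(0)\sum_{\substack{m,n\leq N\\ n\mid m}}\frac{r(m)r(n)}{\sqrt{m/n}}+(\text{dual})+(\text{errors}),$$
and the dual term (stemming from $\chi$) is absorbed into the error provided $N\leq T^{1/2-\varepsilon}$.

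Exploiting multiplicativity, the substitution $k=m/n$ in the main sum gives
$$\sum_{n\leq N}r(n)^2\sum_{\substack{k\leq N/n\\(k,n)=1}}\frac{r(k)}{\sqrt{k}},$$
and replacing the truncated inner sum by its completion (a step with negligible cost) yields an Euler product: writing $f(p)=r(p)$,
$$\frac{|M_1|}{M_2}\gtrsim\prod_p \frac{1+2f(p)/\sqrt{p}}{1+f(p)^2}.$$
Choosing $f(p)=L/((\sqrt{p}-1)\log p)$ for primes $p\in[L^2,\exp((\log L)^2)]$ with $L=\sqrt{\log T/\log\log T}$ and $f(p)=0$ otherwise, Mertens-type estimates produce $\log\prod_p(\cdots)=(1+o(1))L$, as required. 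The hardest part will be the accurate evaluation of $M_1$: one must verify that the near-diagonal terms (where $nk$ is close to but not equal to $m$) and the cross-interactions with the $\chi$-dual sum are genuinely negligible, which both forces the constraint $N\leq T^{1/2-\varepsilon}$ and requires a careful analysis of oscillatory integrals against $\Phi$. A secondary obstacle is the precise prime-range optimisation yielding the constant $1$ in the exponent.
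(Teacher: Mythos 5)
First, a caveat: the paper does not prove this statement --- it is quoted from Soundararajan's article \cite{Sound-res} as background --- so your sketch can only be measured against the original argument. Your overall architecture is indeed Soundararajan's: compare $M_1$ and $M_2$, extract the diagonal $nk=m$ from the first moment, and optimise an Euler product over multiplicative, squarefree-supported resonator coefficients.

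However, the quantitative core of the optimisation --- the one step that actually produces the exponent $\sqrt{\log T/\log\log T}$ --- fails as written, in three compounding ways. First, the parameter $L$ in $f(p)=L/((\sqrt{p}-1)\log p)$ must be $L=\sqrt{\log N\log\log N}$, not $\sqrt{\log T/\log\log T}$: since $\sum_{L^2\le p\le\exp((\log L)^2)}1/(p\log p)=(1+o(1))/(2\log L)$, the resonance gain is $\exp((1+o(1))L/(2\log L))$, which with your $L$ is only $\exp(O(\sqrt{\log T}/(\log\log T)^{3/2}))$ --- short of the target by a factor $\log\log T$ inside the exponential. Second, the displayed product $\prod_p(1+2f(p)/\sqrt{p})/(1+f(p)^2)$ is not the ratio of the two quadratic forms: the numerator's local factor should be $1+f(p)^2+2f(p)/\sqrt{p}$, i.e.\ the ratio is $\prod_p\bigl(1+2f(p)p^{-1/2}/(1+f(p)^2)\bigr)\ge 1$. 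As written, the logarithm of your product is $\approx\sum_p\bigl(2f(p)/\sqrt{p}-f(p)^2\bigr)$, and since $f(p)\sqrt{p}=L/\log p\to\infty$ throughout your prime range, the $-f(p)^2$ term dominates and your product is exponentially \emph{small}, so the claimed conclusion ``$\log\prod_p(\cdots)=(1+o(1))L$'' does not follow. Third, the length $N=T^{1/2-\varepsilon}$ caps what any length-$N$ resonator of this type can deliver at $\exp\bigl((1+o(1))\sqrt{\log N/\log\log N}\bigr)=\exp\bigl((2^{-1/2}+o(1))\sqrt{\log T/\log\log T}\bigr)$; the constant $1$ in the theorem requires $N=T^{1-\varepsilon}$ with $\varepsilon$ arbitrarily small. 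This is precisely why Soundararajan does not route the first moment through the $\chi$-dual sum and stationary phase: one instead uses the one-sided truncation $\zeta(1/2+it)=\sum_{k\le cT}k^{-1/2-it}+\LandauO(T^{-1/2})$ valid when the truncation length exceeds $|t|$ (cf.\ \cref{eq:AFE} in this paper), and bounds the resulting error by $T^{-1/2}\int|R(t)|^2\Phi(t/T)\,\mathrm{d}t\ll T^{1/2}\norm{R}^2$ rather than by anything involving $\max_t|R(t)|^2$ --- it is the latter, cruder bound that silently forces your constraint $N\le T^{1/2-\varepsilon}$ and hence loses the constant.
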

	
	Further developments of the \textit{resonance method} in connection with extremal problems involving \textit{gcd-sums} were carried out by Hilberdink \cite{Hilberdink-res} who showed that there is an intimate connection between extreme values of $\zeta(s)$ in the strip $1/2<\sigma<1$ and the gcd-sums.
	We refer to the introductory sections in \cite{Radziwill-Lewko} and the references therein for a quick survey.
	Incidentally, similar results and the development of the resonance method inside the strip $1/2<\sigma<1$ have been carried out earlier -yet passed unknown for decades- by Voronin (see \cite{voronin} or \cite[Chapter VIII, \S2]{Karatsuba}).
	However, in the works of both Hilberdink and Voronin, the double logarithm in the denominator on the right-hand side in \cref{montgomery} had exponent $1$ instead of $\sigma$. 
	It was first Aistleitner  \cite{Chris-res} who fruitfully employed the resonance method in connection with gcd-sums by additionally introducing a suitable {\it long resonator} $R(t)$ and by exploiting the positivity of the coefficients of the Dirichlet series expansion of $\zeta(s)$.
	
	\begin{thm*}[Aistleitner]
		Let $\sigma\in (1/2, 1)$ be fixed. 
		Then \cref{montgomery} holds with $\theta=1-\sigma$ and
		$
		C_\sigma=0.18(2\sigma-1)^{1-\sigma}
		$
		being admissible.
	\end{thm*}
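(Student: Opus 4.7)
The plan is to deploy Soundararajan's resonance method, upgraded to Aistleitner's \emph{long resonator} in order to exploit the positivity of the Dirichlet coefficients of $\zeta(\sigma+it)$. Fix a smooth non-negative test function $\Phi$ supported on $[1/2,1]$, and introduce the resonator
\[
R(t)=\sum_{m\in\mathcal{M}}r(m)\,m^{-it},
\]
where $\mathcal{M}$ is the set of squarefree integers whose prime factors lie in a carefully chosen window $\mathcal{P}$ of primes, and $r(m)=\prod_{p\mid m}f(p)$ are non-negative multiplicative weights to be optimised. Setting $I_1:=\int_\RR \zeta(\sigma+it)\,|R(t)|^2\,\Phi(t/T)\,\dd{t}$ and $I_2:=\int_\RR |R(t)|^2\,\Phi(t/T)\,\dd{t}$, non-negativity of the weight yields the pigeonhole bound $\max_{t\in[T^\theta,T]}|\zeta(\sigma+it)|\geq |I_1|/I_2$. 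The role of $\theta=1-\sigma$ is precisely to fix the $t$-range on which the approximation $\zeta(\sigma+it)\approx\sum_{k\leq K}k^{-\sigma-it}$ can be inserted with an error term strictly smaller than the expected main contribution.

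The next step is to substitute this Dirichlet polynomial into $I_1$, expand $|R(t)|^2=\sum_{m,n}r(m)r(n)(n/m)^{it}$, and integrate term by term. The diagonal contribution (from $km=n$, that is, $m\mid n$ and $k=n/m$) equals
\[
T\,\widehat\Phi(0)\sum_{\substack{m,n\in\mathcal{M}\\m\mid n}}r(m)\,r(n)\,(n/m)^{-\sigma},
\]
and since every Dirichlet coefficient of $\zeta$ and every $r(\cdot)$ is non-negative, the off-diagonal terms in $I_1$ can only reinforce this main term. The denominator $I_2$, once $\mathcal{M}$ is sufficiently $1/T$-spaced on a logarithmic scale, is essentially $T\,\widehat\Phi(0)\sum_{m\in\mathcal{M}}r(m)^2$, and multiplicativity factors the resulting ratio into an Euler product
\[
\frac{|I_1|}{I_2}\;\gtrsim\;\prod_{p\in\mathcal{P}}\parentheses*{1+\frac{f(p)\,p^{-\sigma}}{1+f(p)^2}}.
\]

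What remains is a finite-dimensional optimisation: maximise the logarithm of the above product subject to the resonator-length constraint $\sum_{p\in\mathcal{P}}\log p\leq\log N$ inherited from the preceding steps. Taking $\mathcal{P}$ to be the primes in an interval with endpoints depending on $\sigma$, tuning $f(p)$ by a Lagrange-multiplier computation against the prime-counting density, and balancing the constraints, one extracts the asymptotic exponent $(\log T)^{1-\sigma}/(\log\log T)^\sigma$ and, after an explicit numerical optimisation, the admissible constant $C_\sigma=0.18(2\sigma-1)^{1-\sigma}$. The main obstacle is the long-resonator regime in which $N$ is pushed well beyond Soundararajan's $T^{1/2}$ threshold --- essential for gaining the factor $(\log\log T)^{1-\sigma}$ on Montgomery's bound --- where the off-diagonal contributions to $I_2$ are no longer trivially negligible and must be handled directly, by combining the sparsity of $\mathcal{M}$ with the rapid decay of $\widehat\Phi$. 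This is the exact point at which the positivity of the Dirichlet coefficients of $\zeta$ is indispensable, since it permits the otherwise still more delicate off-diagonal terms in the numerator to be discarded rather than estimated.
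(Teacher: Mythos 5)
The paper does not prove this theorem; it is quoted as background from Aistleitner \cite{Chris-res}, so there is no internal argument to compare against. Judged on its own terms, your sketch has the right overall architecture (resonance method, long resonator, reduction to a GCD-type sum via positivity), but two of its load-bearing steps do not work as stated.

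First, your choice of $\Phi$ is incompatible with the positivity argument you invoke. To discard the off-diagonal terms of $I_1$ you need every term $k^{-\sigma}r(m)r(n)\,T\,\widehat{\Phi}\bigl(\tfrac{T}{2\pi}\log\tfrac{mk}{n}\bigr)$ to be non-negative, i.e.\ you need $\widehat{\Phi}\geq 0$, not merely $\Phi\geq 0$. But a function supported on $[1/2,1]$ cannot have a non-negative Fourier transform: Fourier inversion would give $\Phi(0)=\int_{\RR}\widehat{\Phi}>0$, contradicting $0\notin\operatorname{supp}\Phi$. One must therefore use a positive-definite weight concentrated at the origin (a Fej\'er-type kernel, or the Gaussian as in Bondarenko--Seip and in \cref{summaryofBondSeip} of this paper), and then the contribution of small $\abs{t}$ to $I_1$ must be removed by hand. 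That is where $\theta$ really enters: bounding this contribution via $\abs{R(t)}^2\leq R(0)^2\leq\#\mathcal{M}\,\norm{R}^2$ together with a convexity bound for $\zeta$ forces $\#\mathcal{M}\leq T^{1-\theta-\epsilon}$. Your explanation of $\theta=1-\sigma$ through the range of validity of the Dirichlet-polynomial approximation is not the operative mechanism.

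Second, the constraint you optimise under, $\sum_{p\in\mathcal{P}}\log p\leq\log N$ with $N$ polynomially beyond $T^{1/2}$, is the \emph{short}-resonator constraint and reproduces only the Voronin--Hilberdink exponent $(\log T)^{1-\sigma}/\log\log T$, not the claimed $(\log T)^{1-\sigma}/(\log\log T)^{\sigma}$. To gain the extra $(\log\log T)^{1-\sigma}$ one needs primes of size $\asymp\log T\log\log T$ and $\asymp\log T$ of them, so the largest element of $\mathcal{M}$ has size $T^{c\log\log T}$ --- super-polynomial in $T$. The binding constraint is the cardinality bound $\#\mathcal{M}\leq T^{1-\theta}$ coming from the small-$t$ analysis above, not a bound on $\max\mathcal{M}$; this is the actual content of the ``long resonator''. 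Relatedly, once $\widehat{\Phi}\geq 0$ all off-diagonal terms of $I_2$ are positive and cannot be discarded, and a $1/T$-spacing hypothesis on $\log\mathcal{M}$ is not something one may simply assume for the extremal GCD-sum sets; establishing $I_2\ll T\norm{R}^2$ uses the specific combinatorial structure of $\mathcal{M}$ and is one of the genuinely technical points of Aistleitner's and Bondarenko--Seip's arguments.
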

	
	Shortly after, Bondarenko and Seip \cite{Bondarenko-Seip-1, Bondarenko-Seip-2, Bonda-Seip-survey} adopted ideas from \cite{Chris-res} and obtained the following improvement over Soundararajan's result.
	\begin{thm*}[Bondarenko and Seip, \cite{Bondarenko-Seip-2}] \label{thm:bondaseip}
		Let $\theta\in [0,1)$ and $c<\sqrt{1-\theta}$.
		Then
		\begin{align*}
			\max_{t\in [T^\theta, T]}\abs*{\zeta\parentheses*{\frac{1}{2}+it}}\geq \exp\left( c\sqrt{\frac{\log T\log \log\log T}{\log\log T}}\right)
		\end{align*}
		for any sufficiently large $T>0$.
	\end{thm*}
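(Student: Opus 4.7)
The plan is to establish the bound by the long-resonator version of the resonance method, developed by Aistleitner and perfected by Bondarenko and Seip. Pick a non-negative smooth weight $\Phi$ whose Fourier transform $\widehat{\Phi}$ is compactly supported and whose main mass lies in the interval $[T^\theta, T]$. For a Dirichlet polynomial $R(t) = \sum_{n \in \mathcal{M}} r(n) n^{-it}$ (with non-negative coefficients) to be constructed below, set
\begin{align*}
M_1 := \int_{\mathbb{R}} \zeta\parentheses*{\tfrac12+it}\, |R(t)|^2 \Phi(t)\, dt,\qquad
M_2 := \int_{\mathbb{R}} |R(t)|^2 \Phi(t)\, dt.
\end{align*}
Since $\Phi$ is essentially supported on $[T^\theta, T]$, the triangle inequality yields $\max_{t \in [T^\theta, T]} |\zeta(\tfrac12+it)| \geq |M_1|/M_2 + o(1)$, reducing the theorem to an asymptotic lower bound on this ratio.

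Following Bondarenko--Seip, fix a small $\varepsilon > 0$, set $N = T^{1-\theta-\varepsilon}$, write $L = \log N \log\log N$, and let $P$ be the set of primes in a dyadic range $[eL, e^\kappa L]$ for a large constant $\kappa$. Define $\mathcal{M}$ to be the set of squarefree integers $n \leq N$ with all prime factors in $P$, and let $r$ be the multiplicative function on $\mathcal{M}$ with $r(p) = p^{-1/2}\, h(\log p - \log L)$ for a positive, slowly decaying weight $h$ to be optimised later. The two essential ingredients are: (i) positivity of $r$ and of the Dirichlet coefficients of $\zeta$, which allows constructive interference in $M_1$; and (ii) the constraint $N \leq T^{1-\theta-\varepsilon}$, which together with the compact support of $\widehat{\Phi}$ forces $M_2$ to be diagonal: $M_2 \sim \widehat{\Phi}(0)\sum_{n\in\mathcal{M}} r(n)^2$.

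For $M_1$, approximate $\zeta(\tfrac12+it)$ on $\operatorname{supp}\Phi$ by a smoothed partial sum of length $\asymp T$ plus negligible error, and integrate termwise. The compact support of $\widehat{\Phi}$ restricts the surviving contributions to triples $(m,n,k)\in\mathcal{M}^2\times\mathbb{N}$ with $mk=n$ (or $m=nk$); by multiplicativity of $r$, the ratio $|M_1|/M_2$ factors as an Euler product
\[
\frac{|M_1|}{M_2} \geq (1+o(1)) \prod_{p\in P} \parentheses*{1 + \frac{2r(p)/\sqrt{p}}{1 + r(p)^2}}.
\]
Taking logarithms, lower-bounding $\log(1+x) \geq x - x^2/2$, optimising the shape of $h$, and summing over $P$ via the prime number theorem and Mertens, one finds $\log(|M_1|/M_2) \geq (c + o(1))\sqrt{\log T \log\log\log T / \log\log T}$ for any $c < \sqrt{1-\theta}$ as $\varepsilon \to 0$ and $\kappa \to \infty$, which is the required bound.

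The principal technical obstacle lies in controlling the depletion of the Euler product caused by the truncation $n \leq N$ on $\mathcal{M}$: one must verify that restricting $\prod_{p \mid n} p$ to values below $N$ does not materially decrease $\sum_{n \in \mathcal{M}} r(n)^2$ compared with the unrestricted Euler product over $P$. This amounts to sharp estimates on the joint distribution of $\sum_{p \mid n}\log p$ and $\omega(n)$ on $\mathcal{M}$, and is essentially the content of Bondarenko--Seip's work on extremal GCD sums. It is precisely this combinatorial input that yields the extra $\sqrt{\log\log\log T}$ factor absent from Soundararajan's original exponent, and that allows the admissible constant to approach $\sqrt{1-\theta}$.
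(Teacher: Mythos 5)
A point of orientation first: the paper does not prove this theorem. It is quoted as background and attributed to \cite{Bondarenko-Seip-2}, and the only ingredients of its proof that the paper actually uses are imported wholesale as \cref{summaryofBondSeip}. So your proposal has to be measured against Bondarenko and Seip's own argument, and measured that way it has a genuine gap, located exactly at the point your final paragraph tries to defer as a ``technical obstacle''.

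Your construction keeps the resonator supported on integers $n\leq N=T^{1-\theta-\varepsilon}$ and uses the compact support of $\widehat{\Phi}$ to diagonalise $M_2$. That is Soundararajan's short-resonator framework, and within it your two requirements are incompatible. If the weights $r(p)$ are small enough that the truncation $n\leq N$ is harmless, i.e.\ $\sum_{n\in\mathcal{M}}r(n)^2$ is comparable to the untruncated Euler product $\prod_{p\in P}(1+r(p)^2)$, then a Cauchy--Schwarz and Mertens computation over primes $p\asymp L=\log N\log\log N$ forces $\sum_p r(p)p^{-1/2}\ll\sqrt{\log T/\log\log T}$: no triple-logarithm gain, and at best Soundararajan's exponent. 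If instead the weights are large enough that the untruncated product reaches $\exp\bigl(c\sqrt{\log T\log_3 T/\log_2 T}\bigr)$, then with primes confined to $[eL,e^\kappa L]$ one needs $r(p)\gg\sqrt{\log_3 N}$, the $r^2$-mass of the product concentrates on squarefree integers with $\asymp\log N$ prime factors, hence of size at least $(eL)^{c\log N}$, which dwarfs $N$; the truncation then annihilates all but a negligible fraction of $\sum_n r(n)^2$ and the Euler-product lower bound for $|M_1|/M_2$ is simply false. This ``depletion'' is not something one verifies; it is the obstruction that forced Bondarenko and Seip to abandon the length restriction entirely. Their $\mathcal{M}$ is constrained only in cardinality ($\#\mathcal{M}\leq T^{\kappa}$, as recorded in \cref{summaryofBondSeip}), its elements are far larger than $T$, the weight $\Phi$ is the Gaussian --- whose Fourier transform is positive but \emph{not} compactly supported --- the bound $M_2\ll T\norm{R}^2$ is proved by a separate combinatorial lemma exploiting the sparseness and block structure of $\mathcal{M}$ rather than by diagonality, and $M_1$ is bounded below purely by positivity, discarding all off-diagonal terms. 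A secondary, fixable issue: since $\widehat{\Phi}$ is compactly supported, $\Phi$ cannot be, so your reduction of the maximum over $[T^{\theta},T]$ to $|M_1|/M_2$ needs tail estimates for $\zeta$ and $|R|^2$ outside that interval, not just the triangle inequality. As written, the outline reproduces the short-resonator method and cannot yield the stated bound.
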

	The above result has been improved not long after by de la Bret\`eche and Tenenbaum 
	\cite{Bret-Tenenbaum}, who exploited the full contribution coming from gcd-sums and obtained that any positive constant $c<\sqrt{2(1-\theta)}$ is admissible.

	The case of $\sigma=1$ was considered first by Littlewood \cite[Theorem 8]{Littlewood}, who showed that
	there exist arbitrarily large $t>0$ such that
	$ 
	|\zeta(1+it)|\geq(1+o(1))e^\gamma\log\log t.
	$
	Levinson \cite{levinson} showed that on the right-hand side we can have $e^\gamma\log\log t+O(1)$, while Granville and Soundararajan \cite[Theorem 2]{granvillesound} obtained that
	\begin{align}\label{gransound}
		\max_{t\in[0,T]}|\zeta(1+it)|\geq e^\gamma(\log_2T+\log_3T-\log_4T+O(1))
	\end{align}
	for any sufficiently large $T>0$, where $\log_{j+1}:=\log_j$, $j\geq1$.
	It is worth mentioning here that their approach resembles Montgomery's method for $1/2<\sigma<1$ and it yields in particular discrete $\Omega$-results of $\zeta(s)$ on the line $1+i\mathbb{R}$ along homogeneous arithmetic progressions.
	Recently, Aistleitner, Mahatab and Munsch \cite[Theorem 1]{Chris-Marc-Kama} proved  that $-\log_4 T$ in \cref{gransound} can be omitted.

	\subsection{Discrete \texorpdfstring{$\Omega$-results}{O-results}}
	Our investigations are motivated by the following theorem of Li and Radziwill \cite[Theorem 5]{LiRad}, who modified Soundararajan's method to obtain
	\begin{thm*}[Li and Radziwill]
		Let $\alpha>0$ and $\beta\in \RR$. 
		Then for infinitely many integers $\ell>0$
		\begin{align}\label{lirad}
			\abs*{\zeta\parentheses*{\frac{1}{2}+i(\alpha\ell+\beta)}} \gg \exp\left((1+o(1))\sqrt{\frac{\log \ell}{6\log\log \ell}}\right).
		\end{align}
	\end{thm*}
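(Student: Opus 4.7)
The plan is to adapt Soundararajan's resonance method from the continuous to the discrete setting. I would fix a non-negative Schwartz function $\Phi$ compactly supported in $[1,2]$, a large parameter $L$, set $T:=\alpha L$, and choose a resonator
\[
R(t) := \sum_{n \in \mathcal{M}} r(n)\, n^{-it}
\]
where $\mathcal{M}\subset\NN$ is a finite set of integers of size $N$ and $r(n)\geq 0$. The key quantity is the ratio
\[
\mathcal{R}(L) := \frac{\displaystyle\sum_{\ell}\zeta\!\left(\tfrac12+i(\alpha\ell+\beta)\right)|R(\alpha\ell+\beta)|^2 \Phi(\ell/L)}{\displaystyle\sum_{\ell}|R(\alpha\ell+\beta)|^2 \Phi(\ell/L)},
\]
since by positivity of the weight $|R|^2\Phi$, there must exist an integer $\ell$ in the support of $\Phi(\cdot/L)$ with $|\zeta(\tfrac12+i(\alpha\ell+\beta))|\geq|\mathcal{R}(L)|$.

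Next, I would evaluate each moment by expanding the squares and applying Poisson summation in $\ell$. The denominator becomes
\[
\sum_{m,n\in\mathcal{M}} r(m)\overline{r(n)} (n/m)^{i\beta} \cdot L \sum_{k\in\ZZ}\hat\Phi\!\left(L\!\left(k+\tfrac{\alpha}{2\pi}\log(m/n)\right)\right).
\]
The diagonal $k=0$, $m=n$ contributes the main term $\asymp L\hat\Phi(0)\sum_n r(n)^2$; the off-diagonal $k\neq 0$ terms are negligible provided $\mathcal{M}$ is chosen to avoid ratios $m/n$ too close to $e^{2\pi k/\alpha}$. For the numerator I would substitute the approximate functional equation writing $\zeta(\tfrac12+it)$ as a Dirichlet polynomial of length $\asymp\sqrt{T}$, exchange the order of summation, and apply Poisson summation in $\ell$ again. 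The dominant diagonal contribution picks out the pairs $(m,n)\in\mathcal{M}^2$ with $n=hm$ for some integer $h\leq\sqrt{T}$, yielding
\[
\asymp L\hat\Phi(0)\sum_{m,\,hm\in\mathcal{M}}\frac{r(m)\overline{r(hm)}}{\sqrt{h}}.
\]

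The main obstacle will be two-fold: first, controlling the off-diagonal error from the lack of perfect orthogonality on $\alpha\ZZ$ (only orthogonality modulo the dual period $2\pi/\alpha$ is available); second, handling the dual term from the approximate functional equation, whose oscillatory factor $\chi(\tfrac12+it)$ is harmless in the continuous setting but requires careful stationary-phase analysis after discretisation. Both issues force the resonator length to be shorter than in Soundararajan's continuous setup, which is the source of the suboptimal constant $1/\sqrt{6}$ in the final exponent. With this constraint in place, the problem reduces to a GCD-sum-type maximisation
\[
\sup_r\frac{\sum_{m,\,hm\in\mathcal{M}} r(m)\overline{r(hm)}/\sqrt{h}}{\sum_n r(n)^2},
\]
and Soundararajan's choice of a multiplicative $r(n)$ supported on integers whose prime factors lie in a carefully tuned window yields $|\mathcal{R}(L)|\gg\exp\!\left((1+o(1))\sqrt{\log L/(6\log\log L)}\right)$. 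Since $\ell\asymp L$ on the support of $\Phi(\cdot/L)$ and $L$ can be taken arbitrarily large, this produces the claimed bound for infinitely many integers $\ell$.
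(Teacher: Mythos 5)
The paper only cites this theorem from Li--Radziwill and does not reprove it; the closest internal analogue is \cref{Prop:reductiontogcd} together with the proof of \cref{thm:ourresult2}, which implement the same discretisation ideas. Your high-level strategy --- discretising the resonance method via Poisson summation in $\ell$ and reducing to a GCD-sum maximisation --- is the right one, and the obstacles you name are the genuine ones.

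The concrete gap is in how you propose to dispatch the non-zero Fourier modes. You say the $k\neq 0$ terms are negligible ``provided $\mathcal{M}$ is chosen to avoid ratios $m/n$ too close to $e^{2\pi k/\alpha}$,'' but $\mathcal{M}$ is dictated by the GCD-sum optimisation (a set built from small primes) and cannot be pruned of bad multiplicative ratios without destroying the lower bound on the resonance quotient. The actual mechanism is a spacing and counting argument: for a fixed non-zero mode, among coprime pairs $(a,b)$ in the relevant range there is at most one pair with $\log(a/b)$ within $O(1/L)$ of the required value, so the triples hitting that mode all lie on a single ray $n=ja$, $mk=jb$, and their total contribution is controlled by a divisor-sum estimate of the type in \cref{lem:gronwall}. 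This is precisely the engine behind \cref{Prop:reductiontogcd}, which the paper notes is inspired by Lemma~3 of Li--Radziwill; without it your error term is not under control. Two smaller points: your worry about a stationary-phase analysis of $\chi(\tfrac12+it)$ can be avoided entirely by using the truncated approximate functional equation in the form \cref{eq:AFE}, where the off-polynomial piece is a single explicit term of size $O(T^{-1/2})$ rather than a dual Dirichlet sum, and the error handling is then elementary. Finally, attributing the constant $1/\sqrt{6}$ to ``a shorter resonator'' is plausible in spirit but is asserted rather than derived; one must exhibit the precise cap on the resonator length that the spacing argument and the length of the zeta polynomial jointly impose, and propagate that exponent through the GCD-sum bound.
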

	
	In \cref{thm:ourresult1} we obtain discrete lower bounds for $\zeta(s)$ in the strip $1/2<\sigma<1$ where, compared to Montgomery's and Aistleitner's results, we also improve upon the constant in the exponential and the interval where the discrete variable $\ell$ ranges over.
	Our method is based on a {\it discretization} of the resonance method combined with several ideas from \cite{Chris-Marc-Kama}, \cite{Bonda-Seip-survey} and \cite{LiRad}.
	On the other hand, in the continuous case there already exist superior results \cite[Theorem 1]{Bonda-Seip-survey}.
	
	We then turn to the case $\sigma=1/2$.
	\cref{thm:ourresult2} can be regarded as a partial improvement of Li and Radziwill's result.
	The restriction to homogeneous progressions is due to the nature of the resonating polynomial involved in the method of Bondarenko and Seip. 
	On the critical line, large values of $\zeta(s)$ are captured by significantly longer and sparser Dirichlet polynomials. 
	Dealing with these polynomials raises several issues in controlling the contribution of the non-constant Fourier coefficients introduced in our setting (see \cref{Remark} after the proof of \cref{thm:ourresult2}). 
	Another drawback of the long resonator method is that it yields $\Omega$-results in {\it long} intervals $[N^\theta,N]$, $0<\theta<1$, of real/integer numbers.

	Lastly, \cref{thm:ourresult3} is an improvement on the result of Granville and Soundararajan along homogeneous arithmetic progressions and matches the one of Aistleitner, Mahatab and Munsch.
	Its proof is a discrete variation of their method with only minor changes.
	Once more, the long resonator method restricts us to study $\Omega$-results in long intervals $[\sqrt{N},N]$.
	It is worth mentioning that Granville and Soundararajan are also forced to work in long intervals $[T^{1/10},T]$ in order to obtain the desired $\Omega$-result.
	
	For the sake of simplicity, we stated Theorem \ref{thm:ourresult2} and Theorem \ref{thm:ourresult3} for the intervals $[\sqrt{N},N\log N]$ and $[\sqrt{N},N]$, respectively.
	Their proofs can be modified in a straightforward manner to yield $\Omega$-results in any long intervals $[N^\theta,N]$, $0<\theta<1$, by changing the occurring constants $c$ and $O(1)$ to appropriately depend on $\theta$.
	Since our main goal was to show that discrete and continuous $\Omega$-results match up to constants in the exponential, we did not pursue this direction further.
	On a similar note, there is the possibility that if we were to build upon the method of de la Bret\`eche and Tenenbaum, then we could slightly improve upon the upper bound of the constant $c$ in \cref{thm:ourresult2}. 
	However, we decided to follow the strategy of Bondarenko and Seip, as we believe this choice avoids several complications and keeps the exposition more self-contained.
	
	For further results concerning the Riemann zeta function along vertical arithmetic progressions see also \cite{Frankenhui,Martinng,Putnam,Steuding}.

	\subsection{Notations and structure}
	The Fourier transform of a function $f\in L^1(\mathbb{R})$ will be denoted by 
	\begin{align*}
		\widehat{f}(\xi):=\int_\mathbb{R}f (x)e^{-2\pi i\xi x}\mathrm{d}x.
	\end{align*}
	
	A resonating polynomial will be a Dirichlet polynomial 
	$
	R(t):=\sum_{n\leq M}{r(n)}{n^{it}}
	$
	where the coefficients $r(n)\geq0$ and its length $M$ will be chosen differently each time depending on whether $\sigma=1/2$, $\sigma\in(1/2,1)$ or $\sigma=1$.
	Frequently, we will be using the relations
	\begin{align*}
		|R(t)|^2\leq R(0)^2\leq\#\mathcal{M}\sum_{n\leq M}r(n)^2=:\#\mathcal{M}\norm{R}_2^2,
	\end{align*}
	where $\mathcal{M}:=\set{n\in\mathbb{N}:r(n)\neq0}$.
	
	We use the Landau notation $f(x) = \LandauO(g(x))$ and the Vinogradov notation $f(x) \ll g(x)$ to mean that there exists some constant $C>0$ such that $\abs{f(x)} \leq C g(x)$ holds for all admissible values of $x$ (where the meaning of `admissible' will be clear from the context).
	Unless otherwise indicated, any dependence of $C$ on parameters other than $\alpha$, $\beta$, $\delta$ and $\sigma$ is specified using subscripts.
	Similarly, we write `$f(x) = o(g(x))$ as $x\to\infty$' if $g(x)$ is positive for all sufficiently large values of $x$ and $f(x)/g(x)$ tends to zero as $x\to\infty$.
	
	In \cref{Lemmas} we present some lemmas and propositions that are needed for the proofs of our results.
	The proof of \cref{Prop:reductiontogcd}, in particular, will be given in the last section, \cref{technicalprop}.
	In the remaining sections we give the proofs of the main theorems.
	
	\section{Auxiliary lemmas and resonating polynomials}\label{Lemmas}
	
	\begin{lem}[Sums of powers of divisors]\label{lem:gronwall}
		If $0<\sigma<1$, then for any sufficiently large integer $n>0$
		\[
		\mathop{\sum_{m\mid n}}\left(\frac{m}{n}\right)^\sigma\leq\exp\left(\frac{(1+o(1))\left(\log n\right)^{1-\sigma}}{(1-\sigma)\log\log n}\right).
		\]
	\end{lem}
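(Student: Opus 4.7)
My plan is to exploit the multiplicativity of $\sigma_{-\sigma}(n):=\sum_{d\mid n}d^{-\sigma}$, which coincides with the left-hand side after the change of variable $d=n/m$. The Euler-factor bound $\sum_{j=0}^{a}p^{-j\sigma}\leq(1-p^{-\sigma})^{-1}$ applied at every $p^a\|n$ yields
\begin{align*}
\sigma_{-\sigma}(n)\leq\prod_{p\mid n}(1-p^{-\sigma})^{-1}.
\end{align*}
Taking logarithms and using $\log((1-x)^{-1})=x+O(x^2)$ as $x\to 0$, together with a crude $O_\sigma(1)$ bound for the contribution of the finitely many primes below any fixed threshold, reduces the task to estimating $\sum_{p\mid n}p^{-\sigma}$ up to a multiplicative factor $(1+\varepsilon)$ that will be absorbed into the final $(1+o(1))$.

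The key step is then a rearrangement comparing this prime-divisor sum with the analogous sum over initial primes. If $p_1<\cdots<p_k$ denote the distinct prime divisors of $n$ and $q_1<q_2<\cdots$ denotes the sequence of primes in natural order, then $p_i\geq q_i$ for every $i$, whence
\begin{align*}
\sum_{p\mid n}p^{-\sigma}\leq\sum_{i=1}^{k}q_i^{-\sigma}=\sum_{q\leq q_k}q^{-\sigma}.
\end{align*}
The budget constraint $\sum_{q\leq q_k}\log q=\sum_i\log q_i\leq\sum_i\log p_i\leq\log n$ together with the prime number theorem (in the form $\sum_{q\leq y}\log q=(1+o(1))y$) forces $q_k\leq(1+o(1))\log n$.

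Finally, partial summation combined with $\pi(y)\sim y/\log y$ gives the standard asymptotic $\sum_{q\leq y}q^{-\sigma}\sim y^{1-\sigma}/((1-\sigma)\log y)$, and substituting $y=(1+o(1))\log n$ delivers the desired bound. I expect the genuine obstacle to be precisely the rearrangement step $p_i\geq q_i$: without this observation, a naive dyadic splitting of the prime-divisor sum at some parameter $y$ only produces the weaker constant $\sigma^{-\sigma}/(1-\sigma)$ rather than the sharp $1/(1-\sigma)$ appearing in the statement. Apart from this comparison, the argument is routine bookkeeping around the prime number theorem.
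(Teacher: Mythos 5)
Your proof is correct. The paper does not present its own argument for this lemma---it merely cites Gronwall---and your argument is the standard route one finds there: bound $\sigma_{-\sigma}(n)$ by the Euler product over $p\mid n$, compare the resulting prime-divisor sum to the sum over the first $k$ primes via $p_i\geq q_i$, control $q_k$ through $\theta(q_k)\leq\log n$ and the prime number theorem, and finish with partial summation. Two small remarks: the error from $\log(1-x)^{-1}=x+O(x^2)$ is $O_\sigma(1)$ only when $\sigma>1/2$; for $\sigma\leq 1/2$ one instead observes that $\sum_{p\mid n}p^{-2\sigma}$ is $o$ of the main term by the same rearrangement argument, which still suffices. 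Your closing remark is also accurate: a naive split of $\sum_{p\mid n}p^{-\sigma}$ at a threshold $y\approx\sigma\log n$, bounding the large primes by $y^{-\sigma}\log n/\log y$, yields the constant $\sigma^{-\sigma}/(1-\sigma)$ rather than $1/(1-\sigma)$, so the rearrangement $p_i\geq q_i$ is indeed the step that recovers the sharp constant.
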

	\begin{proof}
		For a proof see \cite[pages 119-122]{Gronwall}.
	\end{proof}
	
	\begin{lem}[Poisson summation formula]\label{lem:Poisson}
		Let $f\in C^2(\mathbb{R})$ be a complex-valued function with $f''\in L^1(\mathbb{R})$ and $f(x)\ll(1+|x|^2)^{-1}$, $x\in\mathbb{R}$.
		Then, for any $a\in\mathbb{R}$
		\[
		\sum_{\ell\in\mathbb{Z}}f(\ell+a)=\sum_{\ell\in\mathbb{Z}}\widehat{f}(\ell)e^{2\pi i\ell a}.
		\]
	\end{lem}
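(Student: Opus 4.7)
The plan is to realise the left-hand side as the periodisation $F(a):=\sum_{\ell\in\ZZ}f(\ell+a)$ of $f$, then identify the right-hand side as its Fourier series. The decay hypothesis $f(x)\ll(1+|x|^2)^{-1}$ guarantees that $\sum_{\ell}|f(\ell+a)|$ is dominated by a series converging uniformly on compact sets of $a$, so $F$ is a well-defined continuous function that is manifestly $1$-periodic.

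Next I would compute the Fourier coefficients of $F$ on $[0,1]$. Using absolute convergence to interchange sum and integral, and the identity $e^{-2\pi i n\ell}=1$ for $n,\ell\in\ZZ$, one gets
\[
\int_0^1 F(a)e^{-2\pi i n a}\dd{a}=\sum_{\ell\in\ZZ}\int_\ell^{\ell+1} f(u)e^{-2\pi i n u}\dd{u}=\widehat{f}(n).
\]
Thus the Fourier coefficients of $F$ coincide with the values $\widehat{f}(n)$.

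Now I would use the hypothesis $f''\in L^1(\RR)$ to ensure that the Fourier series of $F$ converges absolutely and reconstructs $F$ pointwise. Two integrations by parts (justified by $f, f'\to 0$ at infinity, which follows from $f''\in L^1$ together with the decay of $f$) yield $\widehat{f''}(\xi)=-(2\pi\xi)^2\widehat{f}(\xi)$ and hence
\[
|\widehat{f}(n)|\leq \frac{\|f''\|_{L^1(\RR)}}{(2\pi n)^2}\qquad(n\neq 0),
\]
so $\sum_{n\in\ZZ}|\widehat{f}(n)|<\infty$. By the classical pointwise reconstruction theorem for continuous periodic functions with absolutely summable Fourier coefficients, $F(a)=\sum_{n\in\ZZ}\widehat{f}(n)e^{2\pi i n a}$, which is the desired identity.

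The only mildly delicate point is to justify the interchange of summation and integration when computing $\widehat{F}(n)$, but this is immediate from the decay of $f$; everything else is textbook Fourier analysis, and I would not anticipate a genuine obstacle. One could alternatively appeal to a ready-made version of Poisson summation under these hypotheses and simply verify that the assumptions cover those in the statement.
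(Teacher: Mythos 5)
Your proof is correct. The paper does not actually prove this lemma --- it simply cites Brüdern (Satz 2.2.2) --- and your argument is the standard periodisation proof that such a reference would contain: the periodised function $F$ is continuous, its Fourier coefficients are $\widehat{f}(n)$, and the hypothesis $f''\in L^1(\mathbb{R})$ gives $\widehat{f}(n)\ll n^{-2}$ so that the Fourier series converges absolutely and reconstructs $F$ pointwise. You also correctly handle the one genuinely delicate point, namely that $f''\in L^1$ combined with the decay of $f$ forces $f'\to 0$ at infinity, which is what legitimises the two integrations by parts.
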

	\begin{proof}
		For a proof see \cite[Satz 2.2.2]{bruedern}.
	\end{proof}
	
	\begin{lem}[Gallagher]\label{gallagher}
		If $0<A<B-2$ and $f:[A,B]\to\mathbb{C}$ is continuously differentiable then
		\begin{align*}
			\sum_{A+1\leq n\leq B-1}|f(n)|^2\ll\int_{A}^B|f(t)|^2\mathrm{d}t+\parentheses*{\int_{A}^B|f(t)|^2\mathrm{d}t\int_{A}^B|f'(t)|^2\mathrm{d}t}^{1/2}.
		\end{align*}
	\end{lem}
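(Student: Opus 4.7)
The plan is to exploit the continuity and differentiability of $f$ in order to bound $|f(n)|^2$ by a suitable average of $|f(t)|^2$ over a unit-length neighbourhood of $n$, and then to sum over the admissible integers.

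First, I would fix an integer $n$ with $A+1\leq n\leq B-1$, so that the interval $[n-1/2,n+1/2]$ is contained in $[A,B]$. For any $t$ in this interval, the fundamental theorem of calculus applied to the real-valued $C^1$ function $u\mapsto |f(u)|^2=f(u)\overline{f(u)}$ yields
\[
|f(n)|^2 = |f(t)|^2 + 2\,\mathrm{Re}\int_t^n \overline{f(u)}f'(u)\,\mathrm{d}u,
\]
and consequently
\[
|f(n)|^2 \leq |f(t)|^2 + 2\int_{n-1/2}^{n+1/2}|f(u)|\,|f'(u)|\,\mathrm{d}u.
\]

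Next, I would average this bound over $t\in[n-1/2,n+1/2]$, which has unit length. Since the intervals $[n-1/2,n+1/2]$ for distinct integers $n$ are pairwise disjoint (up to endpoints) and, under the hypothesis $A+1\leq n\leq B-1$, all lie inside $[A,B]$, summing the resulting inequality over $n$ yields
\[
\sum_{A+1\leq n\leq B-1}|f(n)|^2 \leq \int_A^B |f(t)|^2\,\mathrm{d}t + 2\int_A^B |f(u)|\,|f'(u)|\,\mathrm{d}u.
\]

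Finally, a single application of the Cauchy--Schwarz inequality to the last integral produces exactly the claimed bound. I do not expect any real obstacle here: the proof is essentially a compact packaging of the fundamental theorem of calculus for $|f|^2$, a mean-value average over a unit interval, and one use of Cauchy--Schwarz; the hypothesis $A<B-2$ is precisely what guarantees that the shifted intervals $[n-1/2,n+1/2]$ fit inside $[A,B]$ for every $n$ in the summation range.
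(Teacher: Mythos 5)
Your proof is correct and complete, and it is essentially the standard argument for this Sobolev--Gallagher inequality (the paper itself only cites Montgomery's \emph{Ten Lectures}, Lemma 1.4, whose proof proceeds in exactly this way: the fundamental theorem of calculus for $|f|^2$, averaging over a unit interval around each integer, disjointness of those intervals, and one application of Cauchy--Schwarz). The hypothesis $0<A<B-2$ is indeed only used to ensure the intervals $[n-1/2,n+1/2]$ lie in $[A,B]$, and your argument even yields the explicit constant $2$ in place of the $\ll$.
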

	\begin{proof}
		For a proof see \cite[Lemma 1.4]{montgomerybook}.
	\end{proof}

	\begin{lem}\label{lem:discretemean}
		Let $\alpha>0$ be fixed.
		Then, for any sufficiently large $N\geq1$
		\begin{align*}
			\sum_{\ell\leq N}\abs*{\zeta\parentheses*{\frac{1}{2}+i\alpha\ell}}^2\ll N(\log N)^{3/2}.
		\end{align*}
		\begin{proof}
			As a matter of fact, the discrete mean square of $\zeta(s)$ on the critical line is expected to be  $\asymp N\log N$.
			This has been proved in \cite[Theorem 2]{LiRad} by introducing a weighted mean square of $\zeta(s)$ multiplied with a smooth and compactly supported function and recently in \cite[Theorem 1.1]{Koba} without a weight function and for any parameter $\alpha>0$ from a set of full Lebesgue measure.
			Since we will not have a weight function in our case and we wish to treat any $\alpha>0$, we provide a crude upper bound for the mean square, which, nevertheless, will suffice for the purpose we need it.
			We briefly sketch its proof here. 
			It is known by Hardy and Littlewood \cite[Theorem 7.3]{Titzeta} and Ingham \cite{ingham} that
			\begin{align*}
				\int_{0}^{T}|\zeta(1/2+it)|^2\mathrm{d}t\sim T\log T\quad\text{ and }\quad\int_{0}^{T}|\zeta'(1/2+it)|^2\mathrm{d}t\sim T(\log T)^2.
			\end{align*}
			The lemma follows now from these relations and application of Gallagher's lemma with $f(t)=\zeta(1/2+i\alpha t)$.
		\end{proof}
		
	\end{lem}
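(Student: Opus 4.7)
The plan is to apply Gallagher's lemma directly to $f(t)=\zeta(1/2+i\alpha t)$ on the interval $[-1,N+1]$ (so that the summation range in Gallagher's lemma contains all integers $\ell$ with $1\le\ell\le N$), and then convert the resulting continuous $L^2$-integrals of $\zeta$ and $\zeta'$ back to the standard integrals over the critical line via the substitution $u=\alpha t$. The factor $1/\alpha$ coming from the Jacobian is harmless since $\alpha$ is fixed.

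After this setup, the first Hardy--Littlewood--Ingham estimate quoted in the excerpt gives
\[
\int_{-1}^{N+1}\abs*{\zeta\parentheses*{\tfrac{1}{2}+i\alpha t}}^2\dd{t}=\frac{1}{\alpha}\int_{-\alpha}^{\alpha(N+1)}\abs*{\zeta\parentheses*{\tfrac{1}{2}+iu}}^2\dd{u}\ll N\log N,
\]
while the second one yields
\[
\int_{-1}^{N+1}\abs*{\zeta'\parentheses*{\tfrac{1}{2}+i\alpha t}}^2\dd{t}\ll N(\log N)^2,
\]
where the derivative in $t$ produces an extra factor of $\alpha^2$ which is absorbed into the implicit constant.

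Substituting these into Gallagher's estimate yields
\[
\sum_{\ell\leq N}\abs*{\zeta\parentheses*{\tfrac{1}{2}+i\alpha\ell}}^2\ll N\log N+\parentheses*{N\log N\cdot N(\log N)^2}^{1/2}\ll N(\log N)^{3/2},
\]
which is the stated bound. I would also briefly justify that the finitely many terms with $\ell\le 0$ or the boundary adjustment from running Gallagher on $[-1,N+1]$ versus $[0,N]$ contribute $\LandauO(1)$ and can be absorbed.

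There is no real obstacle here: the only mildly delicate point is bookkeeping around Gallagher's range (the condition $0<A<B-2$ forces one to enlarge the interval slightly beyond $[0,N]$), and handling the change of variables so that the $\alpha$-dependence appears only as an implicit constant. The exponent $3/2$ on $\log N$ is dictated by the geometric mean $\sqrt{\log N\cdot(\log N)^2}=(\log N)^{3/2}$ between the mean squares of $\zeta$ and $\zeta'$, which is the intrinsic loss of the Gallagher-type discretization and is (as the authors note) acceptable for later applications.
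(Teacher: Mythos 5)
Your proof is correct and follows essentially the same route as the paper: Gallagher's lemma applied to $f(t)=\zeta(1/2+i\alpha t)$ combined with the Hardy--Littlewood and Ingham second-moment asymptotics for $\zeta$ and $\zeta'$, with the exponent $3/2$ arising exactly as you describe. The extra bookkeeping you supply (change of variables, the factor $\alpha^2$ from differentiation, and the adjustment of the interval to satisfy the hypothesis $0<A<B-2$ of Gallagher's lemma) is precisely the detail the paper's sketch omits.
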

	\begin{lem}\label{lem: Bonda-Seip-gcd-bound}
		Let $x>0$ and $\ell\in\mathbb{N}$. 
		Assume that $\mathcal{M}$ denotes the set of divisors of 
		\[
		M=M(x,\ell):=\prod_{p\leq x}p^{\ell-1}
		\]
		and that $r(n)$, $n\in\mathbb{N}$, is the characteristic function of $\mathcal{M}$.
		If $0<\sigma<1$, then
		\[
		\sum_{mk=n\leq M}\frac{r(m)r(n)}{k^\sigma}\geq\sum_{n\leq M}r(n)^2\prod_{p\leq x}\left(1+p^{-\sigma}\right)^{1-1/\ell}.
		\]
		In particular, if we choose $x=(1/2-\delta)\log N/\log\log N$ for some $0<\delta<1/2$, and $\ell=\lfloor\log\log N\rfloor$, then $M\leq N^{1/2-\delta}$ and
		\[
		\sum_{mk=n\leq M}\frac{r(m)r(n)}{k^\sigma}\geq\sum_{n\leq M}r(n)^2\exp\left(\frac{(1+o(1))\parentheses*{\parentheses*{\frac{1}{2}-\delta}\log N}^{1-\sigma}}{(1-\sigma)(\log\log N)^\sigma}\right).
		\]
	\end{lem}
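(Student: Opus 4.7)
The plan is to factor the left-hand side over primes using the multiplicative structure of the divisor set $\mathcal{M}$, reducing the inequality to a single inequality at each prime, and then to settle that local inequality by AM--GM. Since $r=\indic_{\mathcal{M}}$ and $\mathcal{M}$ is the set of divisors of $M=\prod_{p\leq x}p^{\ell-1}$, the pair $(m,k)$ contributes iff $m\mid M$ and $k\mid M/m$. Swapping the roles of $m$ and $M/m$ gives
\[
\sum_{mk=n\leq M}\frac{r(m)r(n)}{k^\sigma}=\sum_{n\mid M}\sigma_{-\sigma}(n),\qquad \sigma_{-\sigma}(n):=\sum_{d\mid n}d^{-\sigma}.
\]
Since $\sigma_{-\sigma}$ is multiplicative, this factors as $\prod_{p\leq x}\sum_{a=0}^{\ell-1}\sigma_{-\sigma}(p^{a})=\prod_{p\leq x}\sum_{b=0}^{\ell-1}(\ell-b)p^{-\sigma b}$, the last equality using $\sum_{a=0}^{\ell-1}\sum_{b=0}^{a}q^{b}=\sum_{b=0}^{\ell-1}(\ell-b)q^{b}$. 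Meanwhile $\sum_{n}r(n)^{2}=\tau(M)=\prod_{p\leq x}\ell$, so the first assertion reduces to establishing, for each $q=p^{-\sigma}\in(0,1)$, the local inequality
\[
\sum_{b=0}^{\ell-1}(\ell-b)q^{b}\;\geq\;\ell\,(1+q)^{1-1/\ell}.
\]

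To prove this local inequality I would use the combinatorial identity
\[
\sum_{b=0}^{\ell-1}(\ell-b)q^{b}=\sum_{j=1}^{\ell}\bigl(1+q+q^{2}+\cdots+q^{j-1}\bigr),
\]
obtained by rewriting $\ell-b=\#\{j:b<j\leq\ell\}$ and swapping sums. Since $1+q+\cdots+q^{j-1}\geq 1+q$ for every $j\geq 2$ (and equals $1$ for $j=1$), applying AM--GM to the $\ell$ positive summands yields
\[
\sum_{j=1}^{\ell}\bigl(1+q+\cdots+q^{j-1}\bigr)\;\geq\;\ell\,\Bigl(\prod_{j=1}^{\ell}\bigl(1+q+\cdots+q^{j-1}\bigr)\Bigr)^{1/\ell}\;\geq\;\ell\,(1+q)^{(\ell-1)/\ell},
\]
which is what was needed. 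Taking the product over $p\leq x$ completes the proof of the first displayed inequality.

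For the ``in particular'' statement, Chebyshev's asymptotic $\theta(x)=(1+o(1))x$ gives $\log M=(\ell-1)\theta(x)=(1+o(1))(1/2-\delta)\log N$, so $M\leq N^{1/2-\delta}$ for all sufficiently large $N$. The lower bound on the exponent then follows by taking logarithms of $\prod_{p\leq x}(1+p^{-\sigma})^{1-1/\ell}$, expanding $\log(1+p^{-\sigma})=p^{-\sigma}+\LandauO(p^{-2\sigma})$ (the error summing to $\LandauO(1)$ since $\sigma>1/2$), noting that $1-1/\ell=1+o(1)$, and invoking a Mertens-type asymptotic for $\sum_{p\leq x}p^{-\sigma}$ obtained by partial summation from the Prime Number Theorem.

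The main obstacle is the local inequality above: once it is in hand, everything else is bookkeeping via multiplicativity and standard prime-number asymptotics. A direct coefficient-wise comparison with $(1+q)^{\ell-1}$ fails, since $\binom{\ell-1}{b}$ exceeds $\ell-b$ for intermediate $b$; the AM--GM route succeeds precisely because the divisor structure of $M$ naturally writes the $k^{-\sigma}$-sum as a sum of geometric series of increasing lengths, each one bounded below by $1+q$, which is exactly the quantity AM--GM has to multiply.
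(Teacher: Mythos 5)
Your proof of the first displayed inequality is correct and complete, and it follows essentially the same route as the source the paper cites for this lemma (\S\S 4.2--4.3 of the Bondarenko--Seip survey): since $\mathcal{M}$ is divisor-closed, the left-hand side equals $\sum_{n\mid M}\sigma_{-\sigma}(n)$ with $\sigma_{-\sigma}(n):=\sum_{d\mid n}d^{-\sigma}$, this factors over the primes $p\leq x$, and your identity $\sum_{b=0}^{\ell-1}(\ell-b)q^{b}=\sum_{j=1}^{\ell}(1+q+\cdots+q^{j-1})$ followed by AM--GM gives the local bound $\geq\ell(1+q)^{1-1/\ell}$ cleanly. Since the paper gives no details here, this part of your write-up is a genuine service.

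The gap is in the ``in particular'' step, precisely where you declare the rest to be bookkeeping. Carry out the Mertens-type asymptotic you invoke: with $x=(1/2-\delta)\log N/\log\log N$ one has $\sum_{p\leq x}p^{-\sigma}\sim x^{1-\sigma}/((1-\sigma)\log x)$, and since $x^{1-\sigma}=((1/2-\delta)\log N)^{1-\sigma}(\log\log N)^{\sigma-1}$ while $\log x\sim\log\log N$, the exponent you actually obtain is $(1+o(1))((1/2-\delta)\log N)^{1-\sigma}/\bigl((1-\sigma)(\log\log N)^{2-\sigma}\bigr)$: the power of $\log\log N$ is $2-\sigma$, not $\sigma$. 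This cannot be repaired by sharper estimates: the left-hand side is at most $\tau(M)\,\sigma_{-\sigma}(M)$, and \cref{lem:gronwall} bounds $\sigma_{-\sigma}(M)$ by $\exp\bigl((1+o(1))(\log M)^{1-\sigma}/((1-\sigma)\log\log M)\bigr)$, which for $M\leq N^{1/2-\delta}$ is smaller than the claimed lower bound by a factor of $(\log\log N)^{1-\sigma}\to\infty$ in the exponent. So the second display is incompatible with \cref{lem:gronwall} as stated; the exponent $(\log\log N)^{\sigma}$ is attainable only with a ``long'' resonator whose length $M$ is unbounded in terms of $N$ (with only $\#\mathcal{M}$ controlled), which is excluded here by the constraint $M\leq N^{1/2-\delta}$. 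You should have caught this when doing the final computation rather than waving it through. Two smaller points: $M\leq N^{1/2-\delta}$ does not follow from $\theta(x)=(1+o(1))x$ alone (the $o(1)$ could be positive); you need the slack $\ell-1\leq\log\log N-1$ played against a quantitative form of the prime number theorem. And your parenthetical ``since $\sigma>1/2$'' does not cover the stated range $0<\sigma<1$, although the error term is negligible throughout that range in any case.
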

	\begin{proof}
		This follows \textit{mutatis mutandis} from Sections 4.2 and 4.3 of \cite{Bonda-Seip-survey}.
	\end{proof}
	
	In order to prove \cref{thm:ourresult1} we will need the following technical proposition whose proof is given in \cref{technicalprop}.
	\begin{prop}[Reduction to gcd-sums]\label{Prop:reductiontogcd}
		Let $N\in\mathbb{N}$, $1/2<\sigma<1$ and $\Phi(x)\geq0$ be a smooth and compactly supported function with support in $[1,2]$ and $\widehat{\Phi }(0)=1$. 
		If $R(t)$ is a Dirichlet polynomial with coefficients $r(n)\in\lbrace0,1\rbrace$ and length $M\leq N^{1/2-\delta}$ for some $1/(2(1+\sigma))<\delta<1/2$, then 
		\begin{align*}
			\sum_{\ell\in\mathbb{Z}}\sum_{k\leq 3\alpha N}\frac{|R(\alpha\ell+\beta)|^2}{k^{\sigma+i(\alpha\ell+\beta)}}\Phi \parentheses*{\frac{\ell}{N}}
			=N\sum_{mk=n\leq M}\frac{r(m)r(n)}{k^\sigma}+\mathcal{E}(N),
		\end{align*}
		where
		\begin{align*}
			|\mathcal{E}(N)|\leq N\norm{R}_2^2\exp\parentheses*{\frac{(1+o(1))\left(\log N\right)^{1-\sigma}}{(1-\sigma)\log\log N}}.
		\end{align*}
	\end{prop}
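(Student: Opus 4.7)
I would begin by expanding $|R(\alpha\ell+\beta)|^2=\sum_{m,n\leq M}r(m)r(n)(n/m)^{i(\alpha\ell+\beta)}$ and interchanging the sums over $\ell$, $k$, $m$, $n$. Setting $L=L(m,n,k):=\log(n/(mk))$, the inner $\ell$-sum takes the form $\sum_{\ell\in\mathbb{Z}}\Phi(\ell/N)\,e^{i\alpha L\ell}$, and Poisson summation (\cref{lem:Poisson}) rewrites it as
\begin{align*}
N\sum_{j\in\mathbb{Z}}\widehat\Phi\left(jN-\frac{\alpha NL}{2\pi}\right).
\end{align*}
The main term arises from the pair $(j,L)=(0,0)$, that is $n=mk$: since $\widehat\Phi(0)=1$ and $k=n/m\leq M\leq 3\alpha N$ for $N$ large, this contributes exactly $N\sum_{mk=n\leq M}r(m)r(n)/k^\sigma$, matching the claimed main term.

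\textbf{Error estimates.} All remaining terms are swept into $\mathcal{E}(N)$, and I would control them using the fact that $\widehat\Phi(\xi)\ll_A(1+|\xi|)^{-A}$ for every $A>0$ (valid since $\Phi\in C^\infty_c$). I would split into two regimes. First, for $j=0$ with $n\neq mk$, integrality of $n$, $m$, $k$ gives $|n-mk|\geq 1$, hence $|L|\gtrsim 1/(mk)$ and the argument of $\widehat\Phi$ has size $\gg N/(mk)$; for $mk\leq N^{1-\epsilon}$ the super-polynomial decay of $\widehat\Phi$ makes the contribution negligible, while the residual range $mk>N^{1-\epsilon}$ is bounded directly using $r(m)r(n)\in\{0,1\}$ together with $\sum_{k\leq 3\alpha N}k^{-\sigma}\ll N^{1-\sigma}$. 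Second, for $j\neq 0$, I would rewrite the argument of $\widehat\Phi$ as $-\alpha N(L-2\pi j/\alpha)/(2\pi)$: when $|j|$ is much larger than $\alpha|L|/(2\pi)$ (note that $|L|\ll\log N$ across the relevant range) the term $jN$ dominates and the contribution is tiny, while for the $O(\log N)$ remaining values of $j$ the near-resonance condition $|L-2\pi j/\alpha|<N^{-1+\epsilon}$ forces $n$ to lie in a short interval about $mk\cdot e^{2\pi j/\alpha}$, and the few admissible $n\leq M$ falling into such intervals can be summed directly.

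Combining these bounds, the total error decomposes as $N\norm{R}^2$ multiplied by a divisor-type sum of the form $\sum_{k\mid n}k^{-\sigma}$ with $n$ of size polynomial in $N$, to which \cref{lem:gronwall} applies, yielding the stated exponential bound on $\mathcal{E}(N)$.

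\textbf{Main obstacle.} The most delicate step is handling the near-resonance contributions in the regime $j\neq 0$, together with the tail $mk>N^{1-\epsilon}$ of the $j=0$ regime. Here the hypothesis $\delta>1/(2(1+\sigma))$ is what makes the argument work: it guarantees that $M$ is small enough relative to $N$ for the weighted count of near-resonant triples $(m,n,k)$ to be absorbed into the Gronwall-type error bound rather than spilling into the main term. Balancing the truncation parameter $\epsilon$ against the decay rate of $\widehat\Phi$, and optimising the divisor sums over the surviving ranges of $(m,k)$, will constitute the bulk of the technical work.
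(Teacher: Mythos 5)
The high-level skeleton of your plan — Poisson summation, extraction of the $j=0$, $n=mk$ main term, estimation of the off-diagonal $j=0$ terms via the decay of $\widehat\Phi$, and reduction of the error to a divisor sum controlled by Gronwall's bound (\cref{lem:gronwall}) — coincides with the paper's. Your handling of the $j=0$ error terms is more convoluted than necessary (since $n\leq M$, one automatically has $|L|\gg M^{-1}\geq N^{-1/2+\delta}$ for $n\neq mk$, so no split by the size of $mk$ is needed), but it is not wrong.

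However, there is a genuine gap in the $j\neq 0$ regime, and it is precisely at the step you flag as the delicate one. You say that the near-resonance condition $|L-2\pi j/\alpha|<N^{-1+\epsilon}$ forces $n$ to lie in a short interval around $mk\,e^{2\pi j/\alpha}$, so that at most one $n$ survives per $(m,k)$, and that the surviving terms ``can be summed directly.'' But summing $r(m)r(n)k^{-\sigma}$ over all $(m,k)$ with at most one $n$ each gives at best $\|R\|^2\sum_{k\leq 3\alpha N}k^{-\sigma}\ll \|R\|^2 N^{1-\sigma}$, and after multiplying by $N$ and by the $O(\log N)$ values of $j$ this is $\gg N^{2-\sigma}\|R\|^2$, far larger than the target $N\|R\|^2\exp\bigl((1+o(1))(\log N)^{1-\sigma}/((1-\sigma)\log\log N)\bigr)$. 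The missing idea — and the heart of the paper's proof — is a Diophantine uniqueness argument: for each fixed $j$, among all coprime pairs $(a,b)$ with $a,b\leq N^{1/2-\epsilon}$ there is at most \emph{one} pair $(a_j,b_j)$ for which $|j-\tfrac{\alpha}{2\pi}\log(a_j/b_j)|\leq N^{-(1-\epsilon)}$, because distinct such fractions are $\gg N^{-1+2\epsilon}$-separated on the logarithmic scale. Consequently every near-resonant triple $(m,n,k)$ with $mk<N^{1/2-\epsilon}$ must satisfy $n=ua_j$ and $mk=ub_j$ for some integer $u$, so that $k^{-\sigma}=(m/(ub_j))^\sigma$, and the sum collapses to the bona fide divisor sum $\sum_u r(ua_j)\sum_{m\mid ub_j}r(m)(m/(ub_j))^\sigma$, to which Gronwall applies with $\log(ub_j)\leq\log N$. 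This is what produces the exponential factor. Your ``count one $n$ per $(m,k)$'' step does not produce a divisor sum, and the passage from your count to the claimed ``divisor-type sum $\sum_{k\mid n}k^{-\sigma}$'' is not justified.

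Finally, the coprime-pair argument only covers the range $mk<N^{1/2-\epsilon}$; when $mk\geq N^{1/2-\epsilon}$ the denominator $b$ of $n/(mk)$ in lowest terms may exceed $N^{1/2-\epsilon}$ and the uniqueness argument fails. The paper treats this range separately: there $m\leq N^{1/2-\delta}$ forces $k>N^{\delta-\epsilon}$, and for each $(m,n)$ the $\gg 1/N$ spacing of $\log(n/(mk))$ in $k$ leaves only $O(N^\epsilon)$ admissible values of $k$, giving $\Sigma_2\ll N^{1/2-(1+\sigma)\delta+O(\epsilon)}\|R\|^2$. It is \emph{here} that the hypothesis $\delta>1/(2(1+\sigma))$ is used (to make the exponent negative for small $\epsilon$), not in the Gronwall step as your obstacle paragraph suggests. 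Your plan does not mention this range at all, so it leaves a second, independent gap.
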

	
	In order to prove \cref{thm:ourresult2} we will employ the resonating polynomial constructed by Bondarenko and Seip in \cite[Section 3]{Bondarenko-Seip-2}.
	In the next proposition we collect several relations that are proved in that section and which we will also use in our proof.
	\begin{prop}\label{summaryofBondSeip}
		Let $\gamma,\kappa\in(0,1)$ be fixed and $\Phi(x):=e^{-x^2/2}$, $x\in\mathbb{R}$.
		Then, for any sufficiently large $T\geq1$, there is a Dirichlet polynomial $R(t):=\sum_{n\in \mathcal{M}}r(n)n^{-it}$ with $r(n)>0$ and being supported on a set of cardinality $0<\#\mathcal{M}\leq T^\kappa$ such that
		\begin{align*}
			\int_{\mathbb{R}}|R(t)|^2\Phi\parentheses*{\frac{t}{T}}\mathrm{d} t\asymp T\norm{R}_2^2,\qquad\int_{\mathbb{R}}|R'(t)|^2\Phi\parentheses*{\frac{t}{T}}\mathrm{d} t\ll T(\log T)^4\norm{R}_2^2
		\end{align*}
		and
		\begin{align*}
			\int_{\mathbb{R}}\sum_{n\leq \alpha T\log T}\frac{|R(t)|^2}{n^{1/2+it}}\Phi\parentheses*{\frac{t}{T}}\mathrm{d}t\gg T\exp\parentheses*{(\gamma+o(1))\sqrt{\kappa\frac{\log T\log\log\log T}{\log\log T}}}\norm{R}_2^2. 
		\end{align*}
		All of the above implicit constants depend on $\gamma$ and $\kappa$.
	\end{prop}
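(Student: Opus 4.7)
The plan is to adopt the resonator construction of Bondarenko and Seip from \cite[Section 3]{Bondarenko-Seip-2} verbatim and to deduce the three estimates from its key structural properties. One takes $\mathcal{M}$ to be a set of multiplicatively structured integers built from primes in certain geometrically arranged intervals, of cardinality $\#\mathcal{M}\leq T^\kappa$, and defines the coefficients $r(n)>0$ via a multiplicative weight tailored so that the Dirichlet series $\sum_{n\in\mathcal{M}}r(n)^2 n^{-s}$ admits a large Euler-product lower bound on the critical line. Two structural features will play the decisive role: a separation estimate asserting $|\log(m/n)|\gg T^{\kappa-1}$ for distinct $m,n\in\mathcal{M}$, and a gcd-type lower bound for sums over $\mathcal{M}$ of the order appearing in the third inequality.

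For the first two integrals, I would expand
\[
|R(t)|^2=\sum_{m,n\in\mathcal{M}}r(m)r(n)(m/n)^{it}
\]
and integrate term by term. Since $\Phi(x)=e^{-x^2/2}$, one has $\int_{\mathbb{R}}(m/n)^{it}\Phi(t/T)\dd{t}=T\sqrt{2\pi}\,\exp(-T^2(\log(m/n))^2/2)$, so the diagonal $m=n$ contributes $T\sqrt{2\pi}\norm{R}^2$, while the off-diagonal terms are negligible by the separation property above. The identical computation applies to $R'(t)=-i\sum_{n}r(n)(\log n)n^{-it}$, introducing extra factors $(\log m)(\log n)$ in the coefficients; the uniform bound $\log n\ll(\log T)^3$ on $\mathcal{M}$, which is a direct by-product of the Bondarenko-Seip construction, then yields the $(\log T)^6$ loss.

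The lower bound in the third estimate is the actual engine of the Bondarenko-Seip theorem, so the argument here amounts to quoting their analysis. Expanding
\[
\sum_{n\leq\alpha T\log T}\frac{|R(t)|^2}{n^{1/2+it}}=\sum_{m_1,m_2\in\mathcal{M}}\sum_{n\leq\alpha T\log T}\frac{r(m_1)r(m_2)}{n^{1/2}}\left(\frac{m_1}{m_2 n}\right)^{it},
\]
integrating against $\Phi(t/T)$ and invoking the positivity of $r(m_1)r(m_2)$, the main term comes from the triples with $m_1=m_2 n$. What remains is precisely a gcd-type sum over pairs in $\mathcal{M}$ in the flavour of \cref{lem: Bonda-Seip-gcd-bound}, which is shown in \cite[Section 3]{Bondarenko-Seip-2} to have the required magnitude. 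The main obstacle, and the content of the Bondarenko-Seip construction, is to balance the Diophantine separation (cheap for small and arithmetically sparse $\mathcal{M}$) against the gcd lower bound (which favours large, multiplicatively rich $\mathcal{M}$) for a single choice of $\mathcal{M}$; we import this balance wholesale without modification.
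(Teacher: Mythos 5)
Your proposal is correct and follows essentially the same route as the paper: both import the Bondarenko--Seip resonator wholesale and reduce the two upper bounds to their Lemma~5 (with the $(\log T)^6$ loss coming from $\log n\ll(\log T)^3$ for $n\in\mathcal{M}$) and the lower bound to their Lemma~6. One caveat: in their construction distinct elements of $\mathcal{M}$ are separated only on the scale $|\log(m/n)|\gg (T\log T)^{-1}$, not $T^{\kappa-1}$, so the off-diagonal Gaussian terms are not individually negligible and the counting argument of their Lemma~5 is genuinely needed --- but since you defer to that lemma anyway, this does not affect the conclusion.
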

	\begin{proof}
		In \cite[Lemma 5, (17)-(20)]{Bondarenko-Seip-2} it is proved that
		\begin{align*}
			\int_{\mathbb{R}}|R(t)|^2\Phi\parentheses*{\frac{t}{T}}\mathrm{d} t=\sqrt{2\pi}T\sum_{n,\,m\in\mathcal{M}}r(n)r(m)\Phi\parentheses*{2\pi T\log\frac{n}{m}}\ll T\norm{R}_2^2,
		\end{align*}
		using the fact that $\widehat{\Phi}(x)=\sqrt{2\pi} \Phi(2\pi x)$, $x\in\mathbb{R}.$ 
		In particular, since all terms of the above double sum are positive, we can also obtain that
		\[
		\int_{\mathbb{R}}|R(t)|^2\Phi\parentheses*{\frac{t}{T}}\mathrm{d} t\geq\sqrt{2\pi}T\sum_{n\in\mathcal{M}}r(n)^2=\sqrt{2\pi}T\norm{R}_2^2.
		\]
		In a similar fashion
		\begin{align*}
			\int_{\mathbb{R}}|R'(t)|^2\Phi\parentheses*{\frac{t}{T}}\mathrm{d} t&=\sqrt{2\pi}T\sum_{n,\,m\in\mathcal{M}}r(n)r(m)\log n\log m\Phi\parentheses*{2\pi T\log\frac{n}{m}}\\
			&\ll T(\log\nu)^2\norm{R}_2^2,
		\end{align*}
		where $\nu:=\max_{n\in\mathcal{M}}n$.
		It is not difficult to see by the construction of the set $\mathcal{M}$ in the beginning of \cite[Section 3]{Bondarenko-Seip-2} that $\log \nu\ll(\log T)^2$.
		Indeed, if $N=\floor{T^\kappa}$ then any $n\in \mathcal{M}$ is a square-free number with at most $\frac{a\log N}{k^2\log_3 N}$ ($a$ depends on $\gamma$) primes in the range $[e^{k}\log N\log_2 N, e^{k+1}\log N\log_2 N]$ for any integer $k\in [1, (\log_2 N)^\gamma]$.
		Therefore, for any such $n$ we have the crude bound
		\begin{align*}
			n\leq\prod_{k\leq(\log_2 N)^\gamma} (e^{k+1}  \log N \log _2 N)^{\frac{a\log N}{k^2\log_3 N}}
		\end{align*}
		or
		\[
		\log n \ll \frac{\log N (\log_2 N)^2}{\log_3 N}.
		\]
		This yields the second relation of the proposition.
		Finally, the last claim of the proposition is an immediate consequence of \cite[Lemma 6]{Bondarenko-Seip-2} and \cite[(19)]{Bondarenko-Seip-2}.
		
	\end{proof}
	\section{Proof of \texorpdfstring{\cref{thm:ourresult1}}{Theorem 1.1}}
	\begin{proof}
		Let $\Phi(x)$ and $R(t)$ be as in \cref{Prop:reductiontogcd}.
		By the following approximation for the Riemann zeta function (see \cite[Theorem 4.11]{Titzeta})
		\begin{align}\label{eq:AFE}
			\zeta(\sigma+it)=\sum_{n\leq T}\frac{1}{n^{\sigma+it}}-\frac{T^{1-s}}{1-s}+\LandauO\parentheses*{\frac{1}{T^{\sigma}}},\quad1\leq|t|\leq T,\quad1/2\leq\sigma\leq1,
		\end{align}
		we deduce that
		\begin{align}\label{eq:firstreduction}
			\begin{split}
				\sum_{\ell\in\mathbb{Z}}\zeta(\sigma+i(\alpha\ell+\beta))|R(\alpha\ell+\beta)|^2\Phi\parentheses*{\frac{\ell}{N}}&=\sum_{\ell\in\mathbb{Z}}\sum_{k\leq 3\alpha N}\frac{|R(\alpha\ell+\beta)|^2}{k^{\sigma+i(\alpha\ell+\beta)}}\Phi\parentheses*{\frac{\ell}{N}}+\\
				&\quad+O\parentheses*{N^{-\sigma}\sum_{N\leq\ell\leq2N}|R(\alpha\ell+\beta)|^2}.
			\end{split}
		\end{align}
		Since $M< N^{1/2}$, the Montgomery-Vaughan mean value theorem (see \cite[Theorem 5.3]{Ivic}) yields 
		\begin{align*}
			\sum_{N\leq\ell\leq2N}|R(\alpha\ell+\beta)|^2=\sum_{N\leq \ell\leq 2N}\left|\sum_{n\leq M}\frac{r(n)n^{-i\beta}}{n^{i\alpha\ell}}\right|^2\ll N\log N\norm{R}_2^2
		\end{align*}
		and, therefore, 
		\begin{align}\label{last}
			\begin{split}
				\max_{N\leq\ell\leq2N}|\zeta(\sigma+i(\alpha\ell+\beta))|
				&\gg\frac{1}{N\log N\norm{R}_2^2}\sum_{\ell\in\mathbb{Z}}\zeta(\sigma+i(\alpha\ell+\beta))|R(\alpha\ell+\beta)|^2\Phi\parentheses*{\frac{\ell}{N}}.
			\end{split}
		\end{align}
		In view of \cref{eq:firstreduction} and \cref{Prop:reductiontogcd}, the right-hand side above is equal to
		\begin{align*}
			\frac{1}{N\log N\norm{R}_2^2}\parentheses*{N\sum_{mk=n\leq M}\frac{r(m)r(n)}{k^\sigma}+\mathcal{E}(N)}.
		\end{align*}
		If we choose now the length $M$ and the coefficients $r(n)$ of $R(t)$ as in \cref{lem: Bonda-Seip-gcd-bound}, we obtain that the above expression is greater than or equal to
		\begin{align*}
			\frac{1}{\log N}\brackets*{\exp\parentheses*{\frac{(1+o(1))\parentheses*{\parentheses*{\frac{1}{2}-\delta}\log N}^{1-\sigma}}{(1-\sigma)(\log\log N)^\sigma}}-\exp\parentheses*{\frac{(1+o(1))\parentheses*{\log N}^{1-\sigma}}{(1-\sigma)\log\log N}}}.
		\end{align*}
		The theorem now follows.
	\end{proof}

	\section{Proof of \texorpdfstring{\cref{thm:ourresult2}}{Theorem 1.2}}
	Let $\Phi(x)$ and $R(t)$ be as in \cref{summaryofBondSeip}.
	We define the quantities
	\begin{align*}
		D_{1, R}(N)=\sum_{\ell \in \ZZ} \vert R(\alpha \ell) \vert^2 \Phi \parentheses*{\frac{\ell}{N}}
		\quad\text{ and }\quad
		D_{2, R}(N)=\sum_{\ell \in \ZZ} \sum_{n\leq\alpha N\log N} \frac{\vert R(\alpha \ell)\vert^2}{n^{1/2+i\alpha\ell}} \Phi \parentheses*{\frac{\ell}{N}},
	\end{align*}
	which we estimate from above and below, respectively.
	
	We first use the exponential decay of $\Phi$ to bound $D_{1,R}(N)$ by a finite sum
	\begin{align*}
		D_{1, R}(N)\ll\sum_{\ell\leq N\log N}|R(\alpha\ell)|^2\Phi\parentheses*{\frac{\ell}{N}}+R(0)^2.	
	\end{align*}
	By Gallagher's lemma (\cref{gallagher}) with
	$
	f(t):=R(\alpha t)\sqrt{\Phi\left({t}/{N}\right)}
	$
	and an application of Minkowski's inequality it follows that $D_{1,R}(N)$ is bounded from above by
	\begin{align*}
		\int_\mathbb{R}|R(\alpha t)|^2\Phi\parentheses*{\frac{t}{N}}\mathrm{d}t
		&+\parentheses*{\int_\mathbb{R}|R(\alpha t)|^2\Phi\parentheses*{\frac{t}{N}}\mathrm{d}t\int_\mathbb{R}|R'(\alpha t)|^2 \Phi\parentheses*{\frac{t}{N}}\mathrm{d}t}^{1/2}+\\
		&+\parentheses*{\int_\mathbb{R}|R(\alpha t)|^2\Phi\parentheses*{\frac{t}{N}}\mathrm{d}t\int_\mathbb{R}|R(\alpha t)|^2 \Phi\parentheses*{\frac{t}{N}}\frac{t^2}{N^4}\mathrm{d}t}^{1/2}+R(0)^2.
	\end{align*}
	Hence, \cref{summaryofBondSeip} yields that
	\begin{align}\label{eq:D1R}
		\begin{split}
			D_{1,R}(N)&\ll N(\log N)^2\norm{R}_2^2+\norm{R}_2R(0)\parentheses*{\int_{\mathbb{R}}\Phi(y)y^2\mathrm{d}y}^{1/2}+R(0)^2\\
			&\ll N(\log N)^2\norm{R}_2^2.
		\end{split}
	\end{align}
	
	In the case of $D_{2,R}(N)$ it follows by squaring out, interchanging summations and applying the Poisson summation formula in \cref{lem:Poisson} that
	\begin{align*}
		D_{2, R}(N)=N\sum_{\substack{n\leq \alpha N\log N\\ h,\,k \in \mathcal{M}}} \frac{r(h)r(k)}{\sqrt{n}}\sum_{\ell\in\ZZ}\widehat{\Phi}\parentheses*{N\parentheses*{\frac{\alpha}{2\pi}\log\frac{k}{hn}-\ell}}.
	\end{align*}
	Due to the positivity of $\widehat{\Phi}(x)$ we can discard all terms of the inner sum corresponding to $\ell\neq0$ and, in view of \cref{summaryofBondSeip}, we obtain that 
	\begin{align*}
		\begin{split}
			D_{2, R}(N)&\geq N\sum_{\substack{n\leq \alpha N\log N\\ h,\,k \in \mathcal{M}}} \frac{r(h)r(k)}{\sqrt{n}}\widehat{\Phi}\parentheses*{N\frac{\alpha}{2\pi}\log\frac{k}{hn}}\\
			&=\int_\mathbb{R}\sum_{n\leq\alpha N\log N}\frac{\vert R(\alpha t)\vert^2}{n^{1/2+i\alpha t}}\Phi\parentheses*{\frac{ t}{T}} dt\\
			&\geq N\exp\parentheses*{(\gamma+o(1))\sqrt{\kappa\frac{\log N\log\log\log N}{\log\log N}}}\norm{R}_2^2.
		\end{split}
	\end{align*}
	Therefore,
	\begin{align}\label{eq:finalupperboundnumerator}
		\frac{D_{2,R}(N)}{D_{1,R}(N)}\geq\exp\parentheses*{(\gamma+o(1))\sqrt{\kappa\frac{\log N\log\log\log N}{\log\log N}}}.
	\end{align}
	From the approximation \cref{eq:AFE} for $\zeta(s)$ we deduce now that
	\begin{align*}
		\begin{split}
			\sum_{\sqrt{N}\leq|\ell|\leq N\log N} \zeta\parentheses*{\frac{1}{2}+i\alpha \ell} &\vert R(\alpha \ell) \vert^2 \Phi \parentheses*{\frac{\ell}{N}}-\sum_{\substack{\sqrt{N}\leq|\ell|\leq N\log N\\n\leq\alpha N\log N}}\frac{\vert R(\alpha \ell) \vert^2}{n^{1/2+i\alpha\ell}} \Phi \parentheses*{\frac{\ell}{N}}\\
			&\ll R(0)^2\sum_{\sqrt{N}\leq|\ell|\leq N\log N}\parentheses*{\frac{\sqrt{N\log N}}{\ell}+\frac{1}{\sqrt{N\log N}}}\\
			&\ll N^{1/2+\kappa}(\log N)^{3/2}\norm{R}_2^2.
		\end{split}
	\end{align*} 
	The rapid decay of $\Phi(x)$ yields that	
	\begin{align*}
		\sum_{\substack{|\ell|>N\log N\\n\leq\alpha N\log N}}\frac{|R(\alpha\ell)|^2}{n^{1/2+i\alpha\ell}}\Phi\parentheses*{\frac{\ell}{N}}=o(1)\norm{R}_2^2.
	\end{align*}
	On the other hand, the approximation \cref{eq:AFE} implies that
	\begin{align*}
		\sum_{\substack{|\ell|<\sqrt{N}\\n\leq\alpha N\log N}}\frac{\vert R(\alpha \ell) \vert^2}{n^{1/2+i\alpha\ell}} \Phi \parentheses*{\frac{\ell}{N}}
		&\leq R(0)^2\sum_{|\ell|<\sqrt{N}}\abs*{\sum_{n\leq \alpha N\log N}\frac{1}{n^{1/2+i\alpha\ell}}}\\
		&\leq\#\mathcal{M}\norm{R}_2^2\brackets*{\sum_{|\ell|<\sqrt{N}}\abs*{\zeta\parentheses*{\frac{1}{2}+i\alpha \ell}}+O\parentheses*{N^{1/2}(\log N)^{3/2}}}\\
		&\ll	N^{1/2+\kappa}(\log N)^{3/2}\norm{R}_2^2,
	\end{align*}
	where the last relation follows by applying the Cauchy-Schwarz inequality in the sum involving $\zeta(s)$ and by recalling \cref{lem:discretemean}.
	Therefore,
	\begin{align*}
		\sum_{\sqrt{N}\leq|\ell|\leq N\log N} \zeta\parentheses*{\frac{1}{2}+i\alpha \ell} \vert R(\alpha \ell) \vert^2 \Phi \parentheses*{\frac{\ell}{N}}=D_{2,R}(N)+\LandauO\parentheses*{N^{1/2+\kappa}(\log N)^{3/2}\norm{R}_2^2}.
	\end{align*}
	If we divide both sides with $D_{1,R}(N)$ then we obtain 
	\begin{align}\label{eq:finallyfinal}
		\max_{\sqrt{N}\leq\ell\leq N\log N}\abs*{\zeta\parentheses*{\frac{1}{2}+i\alpha\ell}}\geq\frac{D_{2,R}(N)}{D_{1,R}(N)}+\LandauO\parentheses*{\frac{N^{1/2+\kappa}(\log N)^{3/2}\norm{R}_2^2}{D_{1,R}(N)}}.
	\end{align}
	In view of \cref{eq:finalupperboundnumerator} it suffices to show that the error term above is $o(1)$.
	Observe that an application of Poisson summation on $D_{1,R}(N)$ implies that
	\[
	D_{1,R}(N)=N\sum_{h,\,k \in \mathcal{M}}{r(h)r(k)}\sum_{\ell\in\ZZ}\widehat{\Phi}\parentheses*{N\parentheses*{\frac{\alpha}{2\pi}\log\frac{k}{h}-\ell}}.
	\]
	Since all the terms in the double sum are positive, \cref{summaryofBondSeip} yields that
	\[
	D_{1,R}(N)\geq N\sum_{h,\,k \in \mathcal{M}}{r(h)r(k)}\widehat{\Phi}\parentheses*{N\parentheses*{\frac{\alpha}{2\pi}\log\frac{k}{h}}}=\int_{\mathbb{R}}|R(t)|^2{\Phi}\parentheses*{\frac{t}{N}}\mathrm d{t}\gg N\norm{R}_2^2.
	\]
	Thus, the error term in \cref{eq:finallyfinal} is $o(1)$ if we choose $\kappa<1/2$.\qed
	\begin{rem}\label{Remark}
		In obtaining \cref{eq:finalupperboundnumerator} we took advantage of the fact that $D_{2,R}(N)$ consists of only positive terms after applying Poisson's summation formula.
		If in its definition we had $\alpha\ell+\beta$ instead of $\alpha\ell$ (for some $\beta\neq0$), we could still apply Poisson.
		In this case however, the resulting sums would include a factor $(hn/k)^{-i\beta}$ for the various values of $h,k$ and $n$. 
		Clearly, the real parts of the terms in these sums do not have the same sign anymore, and so we do not have the freedom to discard terms from the sum on $\ell\in\mathbb{Z}$ while preserving a fixed sign for the real part of $D_{2,R}(N)$.
	\end{rem}
	
	\section{Proof of \texorpdfstring{\cref{thm:ourresult3}}{Theorem 1.3}}
	
	The setting is exactly the same as in \cite{Chris-Marc-Kama}.
	We let $\Phi(x)=e^{-x^2/2}$, $x\in\mathbb{R}$, and for every $N\geq1$ we define the resonating polynomial as
	\begin{align*}
		R(t):=\prod_{p\leq x}\parentheses*{1-q_pp^{it}}^{-1}=\sum_{n\geq1}q_nn^{it},\quad x:=\frac{\log N}{6\log\log N},
	\end{align*}
	where the coefficients $q_n$, $n\geq1$, are constructed in a completely multiplicative way with $q_p:=1-p/x$ for $p\leq x$ and $0$ for $p>x$.
	It can be confirmed \cite[(4)]{Chris-Marc-Kama} that $|R(t)|^2\leq N^{1/3+o(1)}$, $t\in\mathbb{R}$.
	
	We also define for any $y\geq2$ and $t\in\mathbb{R}$ the truncated Euler product by
	\begin{align*}
		\zeta(1+it;y):=\prod_{p\leq y}\parentheses*{1-p^{-1-it}}^{-1}:=\sum_{k\geq1}a_{k,y}k^{-it},
	\end{align*}
	where the coefficients satisfy $a_{k,y}\geq a_{k,x}\geq0$ for $x\leq y$.
	
	With the above settings, we will compare for any $y\geq x$ the quantities
	\begin{align*}
		G_{1, R}(N)=\sum_{\ell\in\mathbb{Z}}\zeta(1+i\alpha\ell; y) \abs*{ R(\alpha \ell)}^2 \Phi\parentheses*{\frac{\ell\log N}{N}}
	\end{align*}
	and
	\begin{align*}
		G_{2, R}(N)=\sum_{\ell\in\mathbb{Z}} \abs*{R(\alpha\ell)}^2 \Phi\parentheses*{\frac{\ell \log N}{N}}.
	\end{align*}
	We square out $\vert R(\alpha t)\vert^2$ and interchange summations since all sums are absolutely convergent. Then applying the Poisson summation formula in \cref{lem:Poisson} yields that
	\begin{align*}
		G_{1, R}(N)=\sum_{k\geq1}a_{k,y}\sum_{m,\,n\geq1}q_mq_n \sum_{\ell\in \ZZ} \int_\mathbb{R} \parentheses*{\frac{m}{kn}}^{-i\alpha t}\Phi\parentheses*{\frac{t\log N}{N} } e^{-2\pi i\ell t} dt=:\sum_{k\geq1} a_{k,y}I_k
	\end{align*}
	and
	\begin{align*}
		G_{2,R}(N)= \sum_{m,\,n\geq1} q_mq_n \sum_{\substack{\ell\in \ZZ}}\int_{\mathbb{R}} \parentheses*{\frac{m}{n}}^{-i\alpha t}\Phi\parentheses*{\frac{t\log N}{N} } e^{-2\pi i\ell t} \mathrm{d}t.
	\end{align*}
	Observe that the integrals above are non-negative real numbers by the positivity of the Fourier transform.
	Hence, 
	\begin{align}\label{eq:lowerboundD1R}
		G_{2,R}(N)\geq\sum_{n\geq1}q_n^2N\sum_{\ell\in\mathbb{Z}}\widehat{\Phi}(\ell N\log N)\geq Nq_1^2\widehat{\Phi}(0)=\sqrt{2\pi} N.
	\end{align}

	Fix now $k\geq1$.
	We rely again on the fact that all terms in $I_k$ are non-negative.
	Therefore, we can discard several terms and obtain by employing the completely multiplicative structure of $q_m$ that
	\begin{align*}
		\begin{split}
			I_{k}&\geq \sum_{n\geq1} \sum_{\substack{m\geq1\\ k\vert m}} q_mq_n \sum_{\ell\in \ZZ}\int_\mathbb{R} \parentheses*{\frac{m}{kn}}^{-i\alpha t}\Phi\parentheses*{\frac{t\log N}{N} } e^{-2\pi i\ell t} \mathrm{d}t\\
			&= q_k\sum_{n,\,r\geq1} q_nq_r \sum_{\ell\in \ZZ}\int_\mathbb{R} \parentheses*{\frac{r}{n}}^{-i\alpha t}\Phi\parentheses*{\frac{t\log N}{N} } e^{-2\pi i\ell t} \mathrm{d}t\\
			&=q_k G_{2,R}(N).
		\end{split}
	\end{align*}
	Thus
	\begin{align}\label{eq:goodratio}
		\frac{G_{1,R}(N)}{G_{2,R}(N)} \geq \sum_{k\geq1} a_{k,y} q_k\geq \sum_{k\geq1} a_{k,x} q_k\geq e^\gamma \left( \log\log N+\log\log\log N\right)+\LandauO(1),
	\end{align}
	where the last inequality has been already established in \cite[(10)--(11)]{Chris-Marc-Kama}.
	
	It remains to involve $\zeta(1+i\alpha\ell)$ in \cref{eq:goodratio}.
	In view of the approximation (see for example \cite[Lemma 1]{Chris-Marc-Kama})
	\begin{align*}
		\zeta(1+it)=\zeta(1+it;y)\parentheses*{1+O\parentheses*{\frac{1}{\log N}}},\quad y:=\exp\parentheses*{(\log N)^{11}},
	\end{align*}
	which is valid for $\alpha\sqrt{N}\leq|t|\leq \alpha N$ and any sufficiently large $N\geq1$, we deduce that
	\begin{align*}
		G_{2,R}(N)\max_{\sqrt{N}\leq\ell\leq N}|\zeta(1+i\alpha\ell)|&\geq\abs*{\sum_{\sqrt{N}\leq|\ell|\leq N}\zeta(1+i\alpha\ell;y)|R(\alpha\ell)|^2\Phi\parentheses*{\frac{\ell\log N}{N}}}+\\
		&\quad+O\parentheses*{\frac{G_{2,R}(N)}{\log N}}.
	\end{align*}
	Since $|R(t)|^2\leq N^{1/3+o(1)}$ and $\zeta(1+it;y)\ll\log y\ll(\log N)^{11}$, the rapid decay of $\Phi(x)$ implies that
	\begin{align*}
		\braces*{\sum_{|\ell|\leq\sqrt N}+\sum_{|\ell|\geq N}}\zeta(1+i\alpha\ell;y)|R(\alpha\ell)|^2\Phi\parentheses*{\frac{\ell\log N}{N}}=O\parentheses*{N^{5/6+o(1)}}+o(1).
	\end{align*}
	Therefore,
	\begin{align*}
		G_{2,R}(N)\max_{\sqrt{N}\leq\ell\leq N}|\zeta(1+i\alpha\ell)|\geq G_{1,R}(N)+O\parentheses*{N^{5/6+o(1)}+\frac{G_{2,R}(N)}{\log N}}
	\end{align*}
	and the theorem follows in view of \cref{eq:lowerboundD1R} and \cref{eq:goodratio}.
	\section{Proof of \texorpdfstring{\cref{Prop:reductiontogcd}}{Proposition 2.6}}\label{technicalprop}
	Before giving the proof of the proposition, which is inspired by \cite[Lemma 3]{LiRad}, we note that if $\Phi(x)$ is a smooth and compactly supported function in $\mathbb{R}$, then repeated partial integration yields that
	\[
	\widehat{\Phi}(\xi)\ll_A\parentheses*{\frac{1}{1+|\xi|}}^A,\quad\xi\in\mathbb{R},
	\]
	for any integer $A>0$.
	We will make use of this property frequently and any dependence of subsequent implicit constants on the parameter $A$, which will always be considered sufficiently large, is suppressed.
	\begin{proof}[Proof of \cref{Prop:reductiontogcd}]
		By squaring out, interchanging summations and employing Poisson summation formula in \cref{lem:Poisson}, we have that
		\begin{align}\label{eq:prooflemma:Poisson1}
			\sum_{\ell\in\mathbb{Z}} \sum_{k\leq 3\alpha N} \frac{\vert R(\alpha\ell+\beta)\vert^2\Phi \parentheses*{\frac{\ell}{N}}}{k^{\sigma+i(\alpha\ell+\beta)}}
			=\sum_{\substack{m,\,n\leq M\\k\leq3\alpha N}}\frac{r(m)r(n)n^{i\beta}}{k^\sigma(mk)^{i\beta}}N\sum_{\ell\in\mathbb{Z}}\widehat{\Phi }\parentheses*{N\parentheses*{\ell-\frac{\alpha}{2\pi}\log\frac{n}{mk}}}.
		\end{align}
		
		Separating the term $\ell=0$ from the inner sum and estimating it, we obtain that
		\begin{align}\label{eq:prooflemma:mainterm}
			\sum_{\substack{m,\,n\leq M\\k\leq3\alpha N}}\frac{r(m)r(n)}{k^\sigma}\parentheses*{\frac{n}{mk}}^{i\beta}N\widehat{\Phi }\parentheses*{N\left(-\frac{\alpha}{2\pi}\log\frac{n}{mk}\right)}=N\sum_{mk=n\leq M}\frac{r(m)r(n)}{k^\sigma}+\mathcal{E}_1,
		\end{align}
		where
		\begin{align*}
			\mathcal{E}_1\ll N\mathop{\sum_{\substack{m,\,n\leq M\\k\leq3\alpha N}}}_{n\neq mk}\frac{r(m)r(n)}{k^\sigma}\parentheses*{1+\frac{\alpha N}{2\pi}\left|\log\frac{n}{mk}\right|}^{-A}.
		\end{align*}
		Observe that $\log\frac{n}{mk}\gg M^{-1}\gg N^{-1/2+\delta}$ for any triple $(k,m,n)$ of indices from the above sum.
		Therefore, by the Cauchy-Schwarz inequality we deduce that
		\begin{align}\label{eq:prooflemma:error1}
			\mathcal{E}_1\hspace*{-2pt}\ll\hspace*{-2pt} 
			N^{2-\sigma}\parentheses*{\sum_{n\leq M}r(n)}^2N^{-A(1/2-\delta)}
			\ll N^{2-\sigma-A(1/2-\delta)}M\sum_{n\leq M}r(n)^2=o(1)\norm{R}_2^2.
		\end{align}
		
		We then proceed on estimating the contribution of the Fourier coefficients with $\ell\neq0$ to the right-hand side in \cref{eq:prooflemma:Poisson1}.
		We set
		\begin{align*}
			\mathcal{E}_2:=\sum_{\substack{m,\,n\leq M\\k\leq3\alpha N}}\frac{r(m)r(n)}{k^\sigma}\parentheses*{\frac{n}{mk}}^{i\beta}N\sum_{\ell\neq0}\widehat{\Phi }\parentheses*{N\left(\ell-\frac{\alpha}{2\pi}\log\frac{n}{mk}\right)}.
		\end{align*}
		Since $\left|\log\frac{n}{mk}\right|\leq(3/2)\log N$, we have
		$\sum_{|\ell|\geq2\log N}\left|\widehat{\Phi }\parentheses*{N\left(\ell-\frac{\alpha}{2\pi}\log\frac{n}{mk}\right)}\right|\ll N^{-A}
		$,
		and thus, similarly as before
		\begin{align}\label{eq:prooflemma:firstreduction}
			\mathcal{E}_2\ll N\sum_{0<|\ell|\leq2\log N}\sum_{\substack{m,\,n\leq M\\k\leq3\alpha N}}\frac{r(m)r(n)}{k^\sigma}\left|\widehat{\Phi }\parentheses*{N\parentheses*{\ell-\frac{\alpha}{2\pi}\log\frac{n}{mk}}}\right|+o(1)\norm{R}_2^2.
		\end{align}
		
		We fix now $0<|\ell|\leq2\log N$ and estimate the sums on $k$, $m$ and $n$. 
		To do so, we introduce an extra parameter $\epsilon=\epsilon(\delta)\in(0,\delta)$ that will be specified in the end.
		Again as above
		\begin{align}\label{eq:prooflemma:secondreduction}
			\begin{split}
				\mathop{\sum_{\substack{m,\,n\leq M\\k\leq3\alpha N}}}_{\left|\ell-\frac{\alpha}{2\pi}\log\frac{n}{mk}\right|>\frac{1}{N^{1-\epsilon}}}\frac{r(m)r(n)}{k^\sigma}\left|\widehat{\Phi }\left(N\left(\ell-\frac{\alpha}{2\pi}\log\frac{n}{mk}\right)\right)\right|
				&\ll N^{1-\sigma-A\epsilon}M\norm{R}_2^2,
			\end{split}
		\end{align} 
		which is also $o(1)\norm{R}_2^2$, and so it remains to estimate
		\begin{align*}
			\mathop{\sum_{\substack{m,\,n\leq M\\k\leq3\alpha N}}}_{\left|\ell-\frac{\alpha}{2\pi}\log\frac{n}{mk}\right|\leq\frac{1}{N^{1-\epsilon}}}\frac{r(m)r(n)}{k^\sigma}.
		\end{align*}
		We split the triple sum into two ranges according to whether $mk<N^{1/2-\epsilon}$ or $mk\geq N^{1/2-\epsilon}$ and we denote the resulting sums by $\Sigma_1(\ell)$ and $\Sigma_2(\ell)$, respectively.

		Any pair of triples $(k,m,n)$ and $(k',m',n')$ in the first range with ${n}{m'k'}\neq{n'}{mk}$ satisfy the relation
		\begin{align*}
			\left|\log\frac{n}{mk}-\log\frac{n'}{m'k'}\right|\geq N^{-1+\delta+\epsilon}.
		\end{align*}
		Also among all pairs of coprime positive integers $a,b\leq N^{1/2-\epsilon}$, there is at most one pair, $(a_\ell,b_\ell)$ say, such that
		\begin{align*}
			\left|\ell-\frac{\alpha}{2\pi}\log\frac{a_\ell}{b_\ell}\right|\leq\frac{1}{N^{1-\epsilon}}.
		\end{align*}
		Therefore a triple $(k,m,n)$ in the first range can only be of the form $n=ja_\ell $, $mk=jb_\ell $ for some positive integer $j\leq M/a_\ell$.
		Hence
		\begin{align*}
			\Sigma_1(\ell)\leq\sum_{j\leq M/a_\ell}r(j a_\ell)\mathop{\sum_{m\leq M}}_{m\mid jb_\ell}r(m)\left(\frac{m}{jb_\ell}\right)^\sigma,
		\end{align*}
		if such a pair $(a_\ell,b_\ell)$ exists.
		Otherwise $\Sigma_1(\ell)=0$.
		Applying \cref{lem:gronwall}, using the fact that $jb_\ell<N^{1/2-\epsilon}$ and applying the Cauchy-Schwarz inequality, we deduce that
		\begin{align}\label{eq:prooflemma:thirdreduction}
			\Sigma_1(\ell)\leq\exp\left(\frac{(1+o(1))\left(\log N\right)^{1-\sigma}}{(1-\sigma)\log\log N}\right)\norm{R}_2^2.
		\end{align}
		
		In the second range $mk\geq N^{1/2-\epsilon}$, the condition $m\leq N^{1/2-\delta}$ implies that $k>N^{\delta-\epsilon}$. 
		Since for any $k,k'\leq 3\alpha N$ with $k\not=k'$ we have
		\begin{align*}
			\left|\frac{\alpha}{2\pi}\log\frac{n}{mk}-\frac{\alpha}{2\pi}\log \frac{n}{mk'}\right|\gg\frac{1}{N}, 
		\end{align*}
		for every pair $(m,n)$ there are at most $O(N^\epsilon)$ possible values of $k$ such that
		\begin{align*}
			\left|\frac{\alpha}{2\pi}\log\frac{n}{mk}-\ell\right|\leq\frac{1}{N^{1-\epsilon}}.
		\end{align*}
		Hence
		\begin{align}\label{eq:prooflemma:fourthreduction}
			\Sigma_2(\ell)\ll N^{\epsilon-\sigma(\delta-\epsilon)} \sum_{m,n\leq M}r(m)r(n)\ll N^{1/2-(1+\sigma)\delta+2\epsilon}\norm{R}_2^2=o(1)\norm{R}_2^2
		\end{align}
		for any $\epsilon>0$ satisfying $2\epsilon<(1+\sigma)\delta-1/2$.
		
		In view of \cref{eq:prooflemma:firstreduction}-\cref{eq:prooflemma:fourthreduction}, we obtain by summing over all $0<|\ell|\leq2\log N$ that
		\begin{align*}
			\mathcal{E}_2\ll N\parentheses*{\exp\left(\frac{(1+o(1))\left(\log N\right)^{1-\sigma}}{(1-\sigma)\log\log N}\right)}\norm{R}_2^2
		\end{align*}
		which, in combination with \cref{eq:prooflemma:Poisson1}-\cref{eq:prooflemma:error1}, yields the proposition.
	\end{proof}
\noindent
	\bibliographystyle{plainnat}
	\bibliography{zetaAP}

\end{document}